\title{The radius capture number}
\newtheorem{theorem}{Theorem}
\newtheorem{corollary}[theorem]{Corollary}
\newtheorem{lemma}[theorem]{Lemma}
\newtheorem{problem}[theorem]{Problem}
\newtheorem{question}[theorem]{Question}
\newtheorem{remark}[theorem]{Remark}
\newtheorem{proposition}[theorem]{Proposition}
\theoremstyle{definition}
\DeclareMathOperator{\rad}{rad}
\DeclareMathOperator{\diam}{diam}
\DeclareMathOperator{\ecc}{ecc}
\DeclareMathOperator{\BoxProduct}{\mathbin{\Box}}
\DeclareMathOperator{\rc}{rc}
\newcommand{\cvSpan}[1]{\sigma^{\BoxProduct}_V(#1)}
\begin{document}

\author{
Tanja Dravec $^{1,2,}$\thanks{Email: \texttt{tanja.dravec@um.si}}
\and
Vesna Ir\v{s}i\v{c} Chenoweth$^{2,3,}$\thanks{Email: \texttt{vesna.irsic@fmf.uni-lj.si}}
\and
Andrej Taranenko $^{1,2,}$\thanks{Email: \texttt{andrej.taranenko@um.si}}
}

\maketitle

\begin{center}
$^1$ Faculty of Natural Sciences and Mathematics,  University of Maribor, Slovenia\\
\medskip

$^2$ Institute of Mathematics, Physics and Mechanics, Ljubljana, Slovenia\\
\medskip

$^3$ Faculty of Mathematics and Physics,  University of Ljubljana, Slovenia\\
\medskip
\end{center}

\begin{abstract}
   In the classic cop and robber game, two players--the cop and the robber--take turns moving to a neighboring vertex or staying at their current position. The cop aims to capture the robber, while the robber tries to evade capture. A graph $G$ is called a cop-win graph if the cop can always capture the robber in a finite number of moves. In the cop and robber game with radius of capture $k$, the cop wins if he can come within distance $k$ of the robber. The radius capture number ${\rm rc}(G)$ of a graph $G$ is the smallest $k$ for which the cop has a winning strategy in this variant of the game.

   In this paper, we establish that ${\rm rc}(H) \leq {\rm rc}(G)$ for any retract $H$ of $G$. We derive sharp upper and lower bounds for the radius capture number in terms of the graph’s radius and girth, respectively. Additionally, we investigate the radius capture number in vertex-transitive graphs and identify several families $\cal{F}$ of vertex-transitive graphs with ${\rm rc}(G)=\rad(G)-1$ for any $G \in \cal{F}$.
We further study the radius capture number in outerplanar graphs, Sierpi\' nski graphs, harmonic even graphs, and graph products. Specifically, we show that for any outerplanar graph $G$, ${\rm rc}(G)$ depends on the size of its largest inner face. For harmonic even graphs and Sierpi\' nski graphs $S(n,3)$, we prove that ${\rm rc}(G)=\rad(G)-1$. Regarding graph products, we determine exact values of the radius capture number for strong and lexicographic products, showing that they depend on the radius capture numbers of their factors. Lastly, we establish both lower and upper bounds for the radius capture number of the Cartesian product of two graphs.   
\end{abstract}

\noindent
{\bf Keywords}: Cop and robber game, radius capture number, vertex-transitive graphs, outerplanar graphs, graph products

\noindent
{\bf AMS Subj.\ Class.\ (2020):} 05C12, 05C57

\maketitle

\section{Introduction}

The cop and robber game, introduced by Nowakowski and Winkler~\cite{NW-1983} and Quilliot~\cite{Qu-1983}, was intensively studied by numerous authors~\cite{BKP-2012, G-2010}. The cop and robber game is a pursuit-evasion game played on the vertices of finite undirected graphs. First, the cop occupies his initial position, then the robber occupies another vertex. Then they (starting with the cop) alternately either move to an adjacent vertex or stay in the same vertex. Both players know the position of the opposing player. The aim of the cop is to capture the robber, i.e. to occupy the vertex that is occupied by the robber, while the robber wants to evade the cop. A graph $G$ is a {\em cop-win} graph if the cop has a strategy to capture the robber after a finite number of moves regardless of the robber's playing strategy. 

A {\em corner} of a graph $G$ is a vertex $u$ such that there exists $v \in V(G)$ with $N[u] \subseteq N[v]$. A graph $G$ is {\em dismentlable} if there exists a sequence of corners whose iterative deletion results in $K_1$. Cop-win graphs are exactly dismantlable graphs~\cite{NW-1983, Qu-1983}. Since the introduction many variants of the cop and robber game were developed. The most natural one replaces a single cop with a set of cops. In this sense the {\em cop number} of a graph $G$, $c(G)$, was introduced by Aigner and Fromme~\cite{AF-1984}, as the minimum number of cops needed to capture the robber in $G$. The distance $k$ cop and robber game was introduced by Bonato and Chiniforooshan~\cite{BC-2009}. In this game the cop wins if he is at distance at most $k$ from the robber. The distance $k$ cop number of a graph $G$, $c_k(G)$, is the minimum number of cops necessary to win at a given distance $k$. The game was further studied in~\cite{CCNV-2011} under a different name: {\em the cop and robber game with radius of capture $k$}. We will use this name and denote by ${\cal{CWRC}}(k)$ the set of graphs in which cop has a winning strategy in the cop and robber game with radius of capture $k$ regardless of how robber is playing. In~\cite{BCP-2010} an $O(n^{2s+3})$ algorithm for testing weather $c_k(G) \leq s$ is presented. Moreover, upper and lower bounds in terms of order are proved for $c_k(G)$. Chalopin et al. considered the cop and robber game with radius of capture $1$ on bipartite graphs~\cite{CCNV-2011}. They characterized bipartite graphs in ${\cal{CWRC}}(1)$ and explain that the characterization for general graphs seems to be quite challenging. Graphs in ${\cal{CWRC}}(1)$ were studied also in~\cite{Dang-2011}, where the cop and robber game with radius of capture 1 was considered on square-grids, $k$-chordal graphs and outerplanar graphs.

In this paper we define the \emph{radius capture number} of a graph $G$, $\rc(G)$, as the smallest $k$ such that $G \in {\cal{CWRC}}(k)$, i.e. $\rc(G)=\min{\{k \mid G \in {\cal{CWRC}}(k)\}}$. Note, that we always consider the game with exactly one cop. Hence, if $G$ is a cop-win graph, then $\rc(G)=0$. If $G$ is not connected, then $G \notin {\cal{CWRC}}(k)$ for any $k\in {\mathbb{N}}_0$, i.e.\ robber has a winning strategy in the cop and robber game with radius of capture $k$. Namely, if cop in his first move selects a vertex in the component $G_1$ of $G$, then robber selects a vertex in any component different from $G_1$ and wins the game. For this reason if not explicitly stated we will always consider connected graphs. 

In Section~\ref{sec:prelim} we start with basic definitions and present some preliminary results. We show that if $G\in {\cal{CWRC}}(k)$, then ${H \in \cal{CWRC}}(k)$ holds for any retract $H$ of $G$. In Section~\ref{sec:bounds} we present tight upper and lower bounds for the radius capture number  of an arbitrary graph. Section~\ref{sec:vertexTransitive} is devoted to vertex-transitive graphs. We consider different families of vertex-transitive graphs and determine exact values of the radius capture number  for all graphs in presented families. We prove that any generously-transitive graph $G$ satisfies $\rc(G)=\rad(G)-1$. Moreover, we show that distance-transitive graphs and generalized Johnson graphs are generously-transitive and thus we obtain the exact value of the radius capture number  for any distance-transitive and any generalized Johnson graph. We also show that not all vertex-transitive graphs $G$ satisfy $\rc(G)=\rad(G)-1$. Finally, in Section~\ref{sec:other} we investigate the radius capture number  of outerplanar graphs, harmonic even graphs, Sierpiński graphs and graph products. We prove that for any outerplanar graph $G$, $\rc(G)$ depends on the size of the largest inner face. We present both upper and lower bounds for the radius capture number  of the Cartesian product of two graphs. Moreover, for the strong and lexicographic product we get exact values of the radius capture number  which depend on the radius capture number  of the factors.

\section{Preliminaries}\label{sec:prelim}

Let $G=(V,E)$ be a graph. The {\em distance} between two vertices $u,v \in V(G)$, $d_G(u,v)$, is the length of a shortest $u,v$-path in $G$. For $C \subseteq V(G)$ and $v\in V(G)$, $d_G(v,C)=\min{\{d_G(v,x)\mid x \in V(C)\}}$. The {\em open neighborhood} of $u \in V(G)$, $N_G(u)$, is the set of neighbors of $u$ and the {\em closed neighborhood} of $u$ is the set $N_G[u]=N_G(u) \cup \{u\}$. For $S \subseteq V(G)$ the {\em closed neighborhood of $S$} is the set $N_G[S]=\bigcup_{u \in S}N_G[u]$. When the graph $G$ is clear from the context, we shall omit it in the notation. The subgraph of $G$ induced by $S$ is denoted by $G[S]$ and for $v \in V(G)$ we write $G-\{v\}$ instead of $G[V(G) \setminus \{v\}].$ For a connected graph $G$ and $v \in V(G)$ the {\em eccentricity} of $v$, $\ecc(v)$, is the maximum distance between $v$ and any other vertex $u$ of G, i.e.\ $\ecc(v)=\max{\{d(v,u)\mid u \in V(G)\}}$. In a graph $G$, an \emph{antipode} of a given vertex $v$ is a vertex $u$ with $d(v,u)=\ecc(v)$. The {\em diameter} of a graph $G$ is defined as diam$(G)=\max{\{\ecc(v)\mid v\in V(G)\}}$ and the {\em radius} of $G$ is rad$(G)=\min{\{\ecc(v) \mid v \in V(G)\}}$. The {\em girth} of a graph $G$, $g(G)$, is the length of one of its (if any) shortest graph cycles. In this paper acyclic graphs are considered to have girth $0$.  

A {\em homomorphism} $f$ from a graph $G$ to a graph $H$ is a function $f:V(G) \to V(H)$ that maps edges to edges, i.e.\ if $xy \in E(G)$ then $f(x)f(y)\in E(H)$. An induced subgraph $H$ of $G$ is a {\em retract} of $G$ if there exists a homomorphism $f: V(G) \to V(H)$ such that $f(h)=h$ holds for any $h \in V(H)$. The map $f$ is in this case called a {\em retraction}. It was proved in~\cite{BI-1993} that the cop number of a retract $H$ of $G$ never exceeds the cop number of $G$. We will use the same ideas to prove that if $H$ is a retract of $G$ then $G \in {\cal{CWRC}}(k)$ implies that $H \in {\cal{CWRC}}(k)$. We will start by presenting some well-known properties of retracts. 

\begin{remark}\label{rem: retract}
    Let $H$ be an induced subgraph of a connected graph $G$ and $f:V(G) \to V(H)$ a retraction. Then $d_G(x,y) \geq d_H(f(x),f(y))$ for any $x,y \in V(G)$. 
\end{remark}

 Let $H$ be a retract of $G$. Remark~\ref{rem: retract} implies that for any $x,y \in V(H) \subseteq V(G)$, $d_G(x,y)\geq d_H(f(x),f(y))=d_H(x,y)$. Hence $d_G(x,y)=d_H(x,y)$ holds for all $x,y \in V(H)$ and thus a retract $H$ of a graph $G$ is always isometric subgraph of $G$. 

\begin{theorem}\label{thm: retracts}
Let $H$ be a retract of a connected graph $G$ and $k$ an arbitrary non-negative integer. If $G \in {\cal{CWRC}}(k)$, then $H \in {\cal{CWRC}}(k)$.
\end{theorem}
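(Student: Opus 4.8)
The plan is to adapt the shadow-strategy argument of Berarducci and Intrigila~\cite{BI-1993} to the radius-of-capture setting. Let $f:V(G)\to V(H)$ be a retraction and let $\mathcal{S}$ be a winning strategy for the cop in the radius $k$ game on $G$. I would construct a winning strategy for the cop in the radius $k$ game on $H$ by running an auxiliary (imaginary) game on $G$ alongside the real game on $H$, keeping the cop's real position on $H$ equal to the $f$-image of an imaginary cop's position in $G$, while feeding the real robber's moves back into the imaginary game.

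Concretely, suppose $\mathcal{S}$ dictates the initial imaginary cop vertex $c_0\in V(G)$; the real cop then starts at $f(c_0)\in V(H)$. The robber chooses some $r_0\in V(H)$, and I declare the imaginary robber to occupy the same vertex $r_0$, which is a legal choice in $G$ since $V(H)\subseteq V(G)$. Inductively, given the imaginary cop position $c_i$ and the common robber position $r_i$, let $c_{i+1}$ be the vertex prescribed by $\mathcal{S}$, and let the real cop move to $f(c_{i+1})$. This is a legal move in $H$ because $f$ is a homomorphism, so $f(c_i)$ and $f(c_{i+1})$ are equal or adjacent in $H$. When the robber moves in $H$ from $r_i$ to $r_{i+1}$, I copy this move into the imaginary game; since $H$ is an induced subgraph of $G$, $r_i$ and $r_{i+1}$ are equal or adjacent in $G$ as well, so the mirrored move is legal there too. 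Thus the imaginary play is a genuine instance of the radius $k$ game on $G$ against a valid robber, and $\mathcal{S}$ guarantees capture there.

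The two key verifications are that all moves remain legal under the mirroring (handled above via the homomorphism property and the fact that $H$ is induced in $G$), and that a capture in $G$ transfers to $H$. For the latter, because $\mathcal{S}$ is winning, after finitely many rounds the imaginary cop satisfies $d_G(c_i,r_i)\le k$. Since $r_i\in V(H)$ we have $f(r_i)=r_i$, and Remark~\ref{rem: retract} gives $d_H\bigl(f(c_i),r_i\bigr)=d_H\bigl(f(c_i),f(r_i)\bigr)\le d_G(c_i,r_i)\le k$. As the real cop occupies $f(c_i)$ and the real robber occupies $r_i$, the cop has come within distance $k$ of the robber in $H$, so $H\in{\cal{CWRC}}(k)$.

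The part requiring the most care is the bookkeeping of the turn order and the initial placement, ensuring that each side's move is mirrored into a legal move before the corresponding response is computed; once this correspondence is set up correctly, the distance-contraction inequality of Remark~\ref{rem: retract} does all the work of transferring the capture. Note that the cop need not assume $\mathcal{S}$ is memoryless: it suffices for the real cop to simulate the entire imaginary game on $G$ internally, so history-dependent strategies cause no difficulty.
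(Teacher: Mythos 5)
Your proof is correct and takes essentially the same approach as the paper: both run the cop's winning strategy in a parallel (imaginary) game on $G$, have the real cop play the shadow $f(c_i)$ in $H$, mirror the robber's moves from $H$ back into $G$, and transfer the capture using the non-expansiveness of the retraction (Remark~\ref{rem: retract}). The only difference is presentational, in that you make the turn-order bookkeeping and the legality of mirrored moves more explicit.
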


\begin{proof}
Let $f:V(G) \to V(H)$ be a retraction and let $G \in {\cal{CWRC}}(k)$. Hence the cop has a winning strategy on $G$ for the cop and robber game with radius of capture $k$. Consider two parallel cop and robber games with radius of capture $k$, one played on $G$ and the other played on $H$. Since $H$ is an induced subgraph of $G$, the game in $H$ can be considered as being played on $G$. The cop on $G$ plays by using his optimal strategy (he has a winning strategy on $G$ since $G \in  {\cal{CWRC}}(k)$) and uses the following shadow strategy for playing on $H$. If in his $i^{\text{th}}$ move the cop is in $v$ when playing on $G$, then he will move to $f(v)$ when playing on $H$ and $f(v)$ is then called the shadow of $v$. If on $G$ cop moves from $u$ to $v$, then on $H$ he moves from $f(u)$ to $f(v)$. Hence $u=v$ or $uv \in E(G)$ and since $f$ is homomorphism, $f(u)=f(v)$ or $f(u)f(v)\in E(H)$. Thus the move from $f(u)$ to $f(v)$ is a valid move for the cop on $H$. It remains to prove that the described cop's strategy is a winning strategy for the cop and robber game with radius of capture $k$ played on $H$.

Suppose that cop in his $i^{\text{th}}$ move, when playing on $G$, selects the vertex $c_i$. Then, by the above described strategy, he occupies $c_i'=f(c_i)$ in the game on $H$. Moreover, let $r_i'$ be the vertex occupied by the robber in his $i^{\textrm{th}}$ move when he plays on $H$. Then the cop imagines that this was robbers move in $G$ and follows his strategy on $G$. Since $f$ is a homomorphism, Remark~\ref{rem: retract} implies that $d_H(f(r_i'),f(c_i))=d_H(r_i',c_i') \leq d_G(r_i',c_i)$. Since $G \in {\cal{CWRC}}(k)$, there exists $i$ such that $d_G(r_i',c_i) \leq k$ and thus $d_H(r_i',c_i') \leq k$ which implies that $H \in {\cal{CWRC}}(k)$.
\end{proof}

In~\cite{CCNV-2011} authors characterized bipartite graphs that are in ${\cal{CWRC}}(1)$. The proof of that result could be simplified by using Theorem~\ref{thm: retracts}. If $G$ is a bipartite graph from ${\cal{CWRC}}(1)$, then the cop has a winning strategy in the cop and robber game with radius of capture $1$. Let $v$ be the next-to-last position of the robber in the game played on $G$. The proof in~\cite{CCNV-2011} goes by induction and the main step of the proof is to see that $G'=G-\{v\}$ belongs to ${\cal{CWRC}}(1)$. Authors also note that $G'$ is retract of $G$ and thus by Theorem~\ref{thm: retracts} it directly follows that $G' \in {\cal{CWRC}}(1)$.

\section{Bounds and complexity}\label{sec:bounds}

In this section we prove a sharp upper and a lower bound for the radius capture number  of an arbitrary connected graph $G$.

\begin{theorem}\label{thm:boundRad}
    If $G$ is a connected graph, then $\rc(G) \leq \rad(G)-1$.
\end{theorem}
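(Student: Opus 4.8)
The plan is to give the cop the natural \emph{center strategy}: start at a vertex of minimum eccentricity and, if the robber sits too far away, take a single step toward him. First I would fix a central vertex $c \in V(G)$, that is, a vertex with $\ecc(c) = \rad(G)$; such a vertex exists by the definition of the radius. The cop chooses $c$ as his initial position.

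Now consider the robber's first placement at a vertex $r_0$. Because $\ecc(c) = \rad(G)$, every vertex of $G$, and in particular $r_0$, lies at distance at most $\rad(G)$ from $c$, so $d_G(c, r_0) \le \rad(G)$. If already $d_G(c, r_0) \le \rad(G) - 1$, then the capture condition for radius of capture $k = \rad(G)-1$ is met before the cop even moves, and the cop wins. The only remaining case is $d_G(c, r_0) = \rad(G)$.

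In this case the cop, who moves first, advances one step along a shortest path from $c$ to $r_0$, reaching the neighbor $c_1$ of $c$ on that path with $d_G(c_1, r_0) = \rad(G) - 1$. Since the robber has not yet moved, this single move already yields $d_G(c_1, r_0) = \rad(G) - 1 = k$, so the cop wins with radius of capture $\rad(G) - 1$. Hence $G \in {\cal{CWRC}}(\rad(G) - 1)$, which gives $\rc(G) \le \rad(G) - 1$.

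I do not expect a genuine obstacle here, since the argument is a one-move reduction: the worst the robber can do is place at an antipode of $c$, and then the cop closes one unit of distance immediately. The only delicate point is the degenerate case $\rad(G) = 0$, i.e.\ $G = K_1$, which must be excluded, as the inequality would then read $\rc(K_1) = 0 \le -1$. For every connected graph with at least two vertices we have $\rad(G) \ge 1$, so the single step above is always available and the bound holds.
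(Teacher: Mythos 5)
Your proof is correct and follows essentially the same approach as the paper: the cop starts at a central vertex and, after the robber's placement, moves one step along a shortest path toward the robber, bringing the distance down to at most $\rad(G)-1$. Your additional remarks (the case where the robber already places within distance $\rad(G)-1$, and the degenerate case $G=K_1$) are fine but do not change the argument.
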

\begin{proof}
    Let $G$ be a connected graph and let $k=\rad(G)$. We will prove that $G \in {\cal{CWRC}}(k-1)$. Let $v \in V(G)$ be a vertex with the smallest eccentricity, i.e. $\ecc(v)=\rad(G)$. The cop's strategy is that he occupies $v$ in his first move. Let $u$ be the vertex occupied by the robber in his first move and let $v'$ be the neighbor of $v$ on a shortest $u,v$-path. The cop then moves from $v$ to $v'$. Since $d_G(v',u) \leq k-1$ the cop wins the cop and robber game with radius of capture $k-1$ and thus $G \in {\cal{CWRC}}(k-1)$. 
\end{proof}

The bound in Theorem~\ref{thm:boundRad} is sharp. For example, if $n$ is even, then $\rc(C_n)=\frac{n}{2}-1=\rad(C_n)-1$. There are also many other graphs that attain this bound. We will prove in the next section that Johnson and Kneser graphs attain this bound. 

If for any two vertices $x,y \in V(G)$ with $d(x,y)=\rad{(G)}$ it holds that for any $x' \in N[x]$ there exists $y' \in N[y]$ with $d(x',y')=\rad{(G)}$, then $\rc(G)=\rad(G)-1$. Thus we can easily get a sufficient condition for graphs $G$ with $rc(G)=\rad{(G)}-1$. But it is not clear if this condition is also necessary. 

\begin{problem}\label{prob:1}
Characterize connected graphs $G$ with $\rc(G)=\rad(G)-1$.
\end{problem}

We mention the following corollary of the upper bound from Theorem \ref{thm:boundRad} and an algorithm given in \cite{BCP-2010}.

\begin{corollary}
    \label{thm:poly}
    Determining $\rc(G)$ for a graph $G$ on $n$ vertices can be done in time $O(n^6)$.
\end{corollary}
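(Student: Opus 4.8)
The plan is to combine the structural upper bound $\rc(G) \leq \rad(G) - 1$ from \Cref{thm:boundRad} with the complexity result from \cite{BCP-2010}, which tests whether $c_k(G) \leq s$ (for a fixed number of cops $s$) in time $O(n^{2s+3})$. Since throughout the paper we play with exactly one cop, I would specialize this algorithm to $s = 1$, giving a single test of whether $G \in {\cal{CWRC}}(k)$ in time $O(n^{2\cdot 1 + 3}) = O(n^5)$ for any fixed $k$.

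The key observation is that $\rc(G)$ is the smallest $k$ with $G \in {\cal{CWRC}}(k)$, and that membership in ${\cal{CWRC}}(k)$ is monotone in $k$: if the cop wins with radius of capture $k$, he certainly wins with any larger radius. Hence $\rc(G)$ can be found by scanning the candidate values $k = 0, 1, 2, \ldots$ in increasing order and returning the first $k$ for which the one-cop algorithm reports a win. First I would invoke \Cref{thm:boundRad} to bound the search: since $G$ is connected, $\rc(G) \leq \rad(G) - 1 \leq n - 1$, so at most $n$ values of $k$ need to be tested (indeed fewer, but $O(n)$ suffices for the bound). Each test costs $O(n^5)$ by the specialization above, so the total running time is $O(n) \cdot O(n^5) = O(n^6)$.

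The steps, in order, are: (1) note that we may assume $G$ is connected, as the radius capture number is defined only in that case; (2) record the upper bound $\rc(G) \leq \rad(G) - 1 < n$ from \Cref{thm:boundRad}, which caps the number of relevant values of $k$ at $O(n)$; (3) apply the $O(n^{2s+3})$ algorithm of \cite{BCP-2010} with $s = 1$ to decide $G \in {\cal{CWRC}}(k)$ in time $O(n^5)$ for each fixed $k$; and (4) iterate over $k = 0, 1, \ldots, \rad(G)-1$, returning the first $k$ that succeeds, for a total of $O(n^6)$.

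I do not expect a genuine obstacle here, since this is essentially a bookkeeping corollary. The only point requiring a little care is confirming that the algorithm of \cite{BCP-2010} decides exactly the predicate $G \in {\cal{CWRC}}(k)$ for the single-cop game and that its stated complexity $O(n^{2s+3})$ instantiates correctly at $s = 1$; one should also make explicit that computing $\rad(G)$ itself (e.g.\ by all-pairs shortest paths) costs $O(n^3)$ and is therefore absorbed into the dominant $O(n^6)$ term, so it does not affect the final bound.
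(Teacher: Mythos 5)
Your proposal is correct and follows essentially the same route as the paper: iterate $k = 0, 1, \ldots, \rad(G)-1$ using the bound of \Cref{thm:boundRad}, and for each $k$ invoke the $O(n^5)$ single-cop ($s=1$) test of \cite{BCP-2010}, giving $O(n^6)$ overall. Your added remarks on monotonicity of ${\cal{CWRC}}(k)$ and the $O(n^3)$ cost of computing $\rad(G)$ are sensible details the paper leaves implicit.
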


\begin{proof}
    By definition, the smallest such $r$ for which $c_k(G) \leq 1$ equals $\rc(G)$. Using the polynomial algorithm from \cite{BCP-2010} which checks if $c_k(G) \leq 1$ in time $O(n^5)$, combined with the upper bound from Theorem \ref{thm:boundRad} the following simple algorithm determines $\rc(G)$ in time $O(n^6)$. For $k = 0, 1, \ldots, \rad(G)-1$, check if $c_k(G) \leq 1$. If yes, return $k$; otherwise proceed to the next $k$. Thus the algorithm from~\cite{BCP-2010} implies that determining $\rc(G)$ can be done in polynomial time as asserted.
\end{proof}

\begin{theorem}\label{thm:BoundGirth}
    If $G$ is a connected graph, then $\rc(G) \geq \left\lfloor \frac{g(G)}{2}\right\rfloor-1$.
\end{theorem}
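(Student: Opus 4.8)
The plan is to prove the contrapositive statement about the game: writing $m=\floor{g(G)/2}$, I would exhibit a robber strategy that avoids capture at radius $m-2$, so that $G\notin{\cal{CWRC}}(m-2)$ and hence $\rc(G)\ge m-1$. If $g(G)\le 3$ the claimed bound is at most $0$ and holds trivially, so assume $g(G)\ge 4$ and fix a shortest cycle $C$ of length $g(G)$. The first ingredient is that $C$ is isometric in $G$: were there $u,v\in V(C)$ with $d_G(u,v)<d_C(u,v)$, a shortest $u,v$-path of $G$ together with the shorter arc of $C$ between $u$ and $v$ would be a closed walk of length less than $g(G)$ containing a cycle shorter than $C$, a contradiction. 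Thus $d_G(x,y)=d_C(x,y)$ for all $x,y\in V(C)$, and the robber will play entirely on $C$, where distances are the genuine cyclic ones.

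The robber will maintain the invariant that, immediately after each of his moves, $d_G(c,r)\ge m$, where $c$ and $r$ denote the positions of the cop and the robber. Since one cop move decreases this distance by at most one, the invariant forces $d_G(c,r)\ge m-1>m-2$ after every cop move, so the cop never comes within radius $m-2$; combined with the robber being able to establish the invariant on his opening move, this yields $G\notin{\cal{CWRC}}(m-2)$. Two facts motivate the target $m$. First, for any robber position on $C$ one has $d_G(c,r)\ge d_G(c,V(C))$, so when the cop is at distance at least $m-1$ from $C$ the robber is automatically safe; the delicate regime is only when the cop is close to $C$. Second, I would establish the static estimate $\max_{w\in V(C)}d_G(c,w)\ge m$ for every $c\in V(G)$, which guarantees that a target vertex for the robber always exists. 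The naive triangle-inequality argument (comparing the closest cycle vertex $p$ with its antipode $q$) only yields distance at least $\lceil m/2\rceil$; the full bound $m$ requires a girth/closed-walk argument showing that the cop cannot be simultaneously close to a whole ``far middle'' of $C$ without creating a cycle shorter than $g(G)$.

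The heart of the proof, and the step I expect to be the main obstacle, is the dynamic tracking: after the cop moves one step from $c$ to $c'$, the robber (currently at a farthest cycle vertex $r$ with $d_G(c,r)\ge m$) must move to a cycle-neighbor $r'$ of $r$, or stay, so that $d_G(c',r')\ge m$ is restored. The intended lemma is that a farthest vertex of $C$ from $c'$ is reachable from $r$ in a single cyclic step, i.e.\ that the location of the farthest cycle vertex shifts by at most one per cop move. This is where the girth hypothesis does the real work: the distance function $w\mapsto d_G(c,w)$ restricted to $C$ is $1$-Lipschitz in the cyclic metric, and one must rule out the scenario in which two far-apart peaks of this function are separated by a deep valley and exchange dominance under a single cop move, stranding the robber at a peak that the cop then pushes below $m$. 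I would control this by proving that, when the cop is close enough to $C$ to threaten the robber, two distinct near-minima of the distance function force short paths from $c$ to cycle vertices that are far apart on $C$, which together with an arc of $C$ create a cycle of length less than $g(G)$ unless the competing regions are forbidden; this should confine the threat to a single moving arc and let the robber ride the farthest vertex continuously. Once this tracking lemma is in place, the invariant propagates by induction and the theorem follows.
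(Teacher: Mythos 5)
Your plan follows the same route as the paper's proof: the robber confines himself to a shortest cycle $C$ (which is isometric in $G$), maintains the invariant $d_G(c,r)\geq m=\lfloor g(G)/2\rfloor$ after each of his own moves, and the girth hypothesis is supposed to enter through closed-walk/short-cycle arguments. The problem is that your write-up is a plan rather than a proof: both load-bearing steps are announced but never carried out. The static estimate ($\max_{w\in V(C)}d_G(c,w)\geq m$ for every $c\in V(G)$) appears only as ``I would establish,'' and the dynamic step --- which you yourself call the main obstacle --- is handled with ``I would control this by proving \dots this should confine the threat.'' Nowhere do you actually exhibit the cycle of length less than $g(G)$ whose existence these claims hinge on, so the argument has a hole precisely where the girth hypothesis must do its work. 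Identifying the difficulty (correctly, as it happens: the naive triangle-inequality bound is indeed insufficient, and the dynamic step is indeed delicate) is not the same as resolving it.

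For comparison, the paper resolves the dynamic step with a short, local, two-path argument, and the lemma it needs is weaker than the global tracking statement you propose. You do not need the farthest vertex of $C$ from the cop to shift by at most one cyclic step per cop move; you only need: if the cop $c$ stands off the cycle at distance $m-1$ from the robber's cycle vertex $r$, then some cycle-neighbor of $r$ is at distance at least $m$ from $c$. Concretely: if a cycle-neighbor $u$ of $r$ also had $d(c,u)=m-1$, then the two shortest paths from $c$ to $r$ and to $u$, together with the edge $ur$, form a closed walk of length at most $2m-1<g(G)$ in which the edge $ur$ occurs exactly once (it cannot lie on either shortest path, as that would force $d(c,r)\leq m-2$ or $d(c,u)\leq m-2$), and the edges of odd multiplicity in such a walk contain a cycle shorter than the girth, a contradiction; if instead $d(c,u)=m-2$, the analogous walk through both cycle-neighbors $u$ and $w$ of $r$, of length at most $(m-2)+2+(m-1)=2m-1<g(G)$, shows that $d(c,w)\geq m$, so the robber moves to $w$. (The case $c\in V(C)$ is immediate from isometry of $C$.) The same style of computation, applied to a local maximum of $d_G(c,\cdot)$ on $C$ lying between two consecutive local minima, proves your static estimate. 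Until these closed-walk arguments are actually performed --- they are the entire content of the theorem --- your proposal establishes nothing beyond the framework that the paper also uses, and as submitted it does not prove $G\notin{\cal{CWRC}}(m-2)$.
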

\begin{proof}
    Let $C:x_1,x_2,\ldots, x_k,x_1$ be a shortest cycle of $G$ (of length $k$) and let $c_1=v$ be the vertex occupied by the cop in his first move. The strategy of robber is to first select a vertex $r_1$ on $C$ which is at distance at least $\left\lfloor\frac{k}{2}\right\rfloor$ from $c_1$ and always only move on $C$ in such a way that after each of the robber's moves it holds true that $d_G(c_i,r_i) \geq \left\lfloor\frac{k}{2}\right\rfloor$, where $c_i$ and $r_i$ denote the vertices occupied by the cop and the robber in the $i^{\text{th}}$ move, respectively. 
    
    In the rest of the proof we show that for any $i$, such a vertex $r_i$ for the robber exists. The proof goes by induction on $i$. Since $k=g(G)$, there exists $j \in [k]$ such that $d_G(c_1,x_j) \geq \left\lfloor \frac{k}{2}\right\rfloor$ and hence $r_1=x_j$ has the desired property. Now, let $c_i$ be the vertex occupied by the cop in his $i^{\text{th}}$ move and let $r_{i-1}=x_{j'}$ for some $j' \in [k]$ (remember, that the robber's strategy is to only move on $C$). By the induction assumption $d_G(c_{i-1},r_{i-1}) \geq \lfloor \frac{k}{2} \rfloor$. If $d_G(c_i,r_{i-1}) \geq \lfloor \frac{k}{2} \rfloor$, then let $r_i=r_{i-1}$. Otherwise, $d_G(c_i,r_{i-1})=\lfloor \frac{k}{2} \rfloor -1$. If $c_i$ is on the cycle $C$ there clearly exists $x_{j''}\in N(x_{j'})\cap V(C)$ such that $d_G(c_i,x_{j''})=\lfloor \frac{k}{2} \rfloor$, thus $r_{i}=x_{j''}$ and this case is done. If $c_i$ is not a vertex of the cycle $C$ and $x_{j''}\in N(x_{j'})\cap V(C)$ such that $d_G(c_i,x_{j''})=\lfloor \frac{k}{2} \rfloor -1$, then the vertices $c_i, x_{j'}$ and $x_{j''}$ lie on an induced cycle of length at most $\lfloor \frac{k}{2} \rfloor -1 + \lfloor \frac{k}{2} \rfloor -1 + 1 = 2\lfloor \frac{k}{2} \rfloor -1 \leq k-1$, a contradiction with the fact that $k$ is the length of a shortest cycle. Thus, the robber can move to any of the two neighbors of $x_{j'}$ on the cycle $C$. This completes the proof.
\end{proof}

The bound from Theorem~\ref{thm:BoundGirth} is sharp. For any $n \geq 4$, $\rc(C_n)=\lfloor \frac{n}{2} \rfloor -1 = \lfloor \frac{g(G)}{2} \rfloor-1$. We will prove in Section~\ref{sec:outerplanar} that outerplanar graphs in which all inner faces are of the same length also attain this bound.

\section{Vertex-transitive graphs}\label{sec:vertexTransitive}
In this section we determine the value for the radius capture number  for different classes of vertex-transitive graphs. All presented families of graphs in this section achieve the upper bound from Theorem~\ref{thm:boundRad}. However, this is not true for all vertex-transitive graphs, as we show towards the end of this section.

A graph $G$ is \emph{vertex-transitive} if for any $u,v \in V(G)$ there exists an automorphism that maps $u$ to $v$. A graph $G$ is \emph{generously-transitive} if for any $u,v \in V(G)$ there exists an automorphism $\varphi$ with $\varphi(v)=u$ and $\varphi(u)=v$. In the next result we prove that all generously-transitive graphs achieve the bound from Theorem~\ref{thm:boundRad}.

\begin{theorem}\label{thm:strongAut}
    If $G$ is generously-transitive, then $\rc(G)=\rad(G)-1$.
\end{theorem}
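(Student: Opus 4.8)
The plan is to prove $\rc(G)=\rad(G)-1$ for any generously-transitive graph $G$. By \Cref{thm:boundRad} the upper bound $\rc(G)\le\rad(G)-1$ is automatic, so the whole task is to exhibit a winning strategy for the robber in the game with radius of capture $k=\rad(G)-2$; equivalently, to show $G\notin\mathcal{CWRC}(\rad(G)-2)$. In a vertex-transitive graph every vertex has the same eccentricity, so $\ecc(v)=\rad(G)=\diam(G)$ for all $v$, and hence every vertex $v$ has an antipode at distance exactly $\rad(G)$. The robber's goal will be to maintain, after each of his moves, that he sits at distance at least $\rad(G)-1$ from the cop.

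First I would set up the generous-transitivity as the robber's core tool: for the cop's current position $c$ and the robber's desired escape logic, I want to use, for any pair $u,v$ with $d(u,v)=\rad(G)$, an automorphism $\varphi$ that swaps $u$ and $v$. The initial move is easy: after the cop picks $c_1$, the robber picks an antipode $r_1$ of $c_1$, so $d(c_1,r_1)=\rad(G)\ge \rad(G)-1$. The real content is the inductive maintenance step. Suppose after the robber's $(i-1)$-st move we have $d(c_{i-1},r_{i-1})\ge\rad(G)-1$, and then the cop moves to $c_i$, which can decrease the distance by at most one, so $d(c_i,r_{i-1})\ge\rad(G)-2$. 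If $d(c_i,r_{i-1})\ge\rad(G)-1$ the robber simply stays put and we are done. The problematic case is $d(c_i,r_{i-1})=\rad(G)-2$, where the robber must move to some neighbor $r_i$ of $r_{i-1}$ (or stay) with $d(c_i,r_i)\ge\rad(G)-1$.

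The hard part will be producing, in that tight case, a neighbor of $r_{i-1}$ that restores distance $\rad(G)-1$ from $c_i$, and this is exactly where generous-transitivity should enter. The idea I would pursue: since $\ecc(c_i)=\rad(G)$, the vertex $c_i$ has some antipode $w$ with $d(c_i,w)=\rad(G)$; I want to relate $r_{i-1}$ (which lies on a shortest $c_i$--$w$ ``sphere'' at distance $\rad(G)-2$) to a neighbor one step further out. Concretely, I expect to argue that if every neighbor of $r_{i-1}$ were at distance at most $\rad(G)-2$ from $c_i$, then $r_{i-1}$ would be a ``corner''-like local maximum of the distance-from-$c_i$ function, contradicting the existence of a geodesic from $c_i$ passing through (or past) $r_{i-1}$ toward an antipode — so some neighbor $r_i$ of $r_{i-1}$ has $d(c_i,r_i)=\rad(G)-1$. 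Generous-transitivity is the ingredient that guarantees the required symmetric antipodal structure, letting the robber swap the roles of his position and the cop's along an automorphism so that the needed escape neighbor always exists regardless of how the cop approaches.

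In summary, the structure of the proof I would write is: (1) invoke \Cref{thm:boundRad} for the upper bound; (2) observe $\ecc(v)=\rad(G)$ for all $v$ and fix the robber's invariant $d(c_i,r_i)\ge\rad(G)-1$; (3) handle the initial move via antipodes; (4) run the induction, where the only nontrivial subcase is $d(c_i,r_{i-1})=\rad(G)-2$, and resolve it by using a swapping automorphism provided by generous-transitivity to exhibit an escape neighbor $r_i$. The main obstacle I anticipate is making step (4) fully rigorous: translating ``there is a symmetry exchanging an antipodal pair'' into ``the robber always has a distance-preserving or distance-increasing neighbor available,'' and in particular ruling out a configuration where the cop has pinned the robber at a local distance maximum from which every neighbor moves him closer.
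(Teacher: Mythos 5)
Your overall plan (upper bound from \Cref{thm:boundRad}, a robber invariant maintained by induction, swap automorphisms in the tight case) is the same as the paper's, but two genuine gaps remain, one of which is fatal to the argument as sketched.

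First, your invariant is off by one. To prove $\rc(G)\geq\rad(G)-1$ the robber must win the game with radius of capture $\rad(G)-2$, i.e.\ the distance between the players must stay at least $\rad(G)-1$ at \emph{every} moment of the game, including immediately after each cop move. Your invariant only bounds the distance after the robber's moves, and in your ``problematic case'' you explicitly allow $d(c_i,r_{i-1})=\rad(G)-2$ after the cop's move; at that instant the cop has already come within the capture radius and has won, and the robber's subsequent escape cannot undo this. The paper avoids this by maintaining the stronger invariant $d(c_i,r_i)\geq\rad(G)$ after every robber move (achievable because generous transitivity implies vertex transitivity, so $\diam(G)=\rad(G)$ and every vertex has an antipode at distance exactly $\rad(G)$); then after any cop move the distance is still at least $\rad(G)-1$.

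Second, and more seriously, the heart of the proof --- producing the escape neighbor in the tight case --- is never actually carried out, and the concrete argument you sketch is fallacious. If every neighbor of $r_{i-1}$ were at distance at most $\rad(G)-2$ from $c_i$, this would make $r_{i-1}$ a local maximum of the distance-from-$c_i$ function, but that contradicts nothing: no geodesic from $c_i$ to an antipode of $c_i$ is forced to pass through or near $r_{i-1}$. Indeed, your sketched contradiction uses only facts available in every vertex-transitive graph (all eccentricities equal $\rad(G)$), so if it were valid it would prove $\rc(G)=\rad(G)-1$ for all vertex-transitive graphs --- contradicting the graph \emph{CubicVT[24,6]} of Figure~\ref{fig:graph-cd-rad-2}, which is vertex-transitive (even Cayley) with $\rc(G)=\rad(G)-2$. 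The mechanism you gesture at but never pin down is the following, and it is exactly where generous transitivity enters: apply the swapping automorphism to the pair consisting of the cop's \emph{current} position and the robber's position, and use the cop's \emph{previous} position as the witness. In the paper's notation, if $d(c_{i+1},r_i)=\rad(G)-1$, then $c_{i+1}$ has a neighbor, namely $c_i$, at distance at least $\rad(G)$ from $r_i$ (this is the inductive invariant); taking $\varphi$ with $\varphi(c_{i+1})=r_i$ and $\varphi(r_i)=c_{i+1}$, the vertex $\varphi(c_i)$ is a neighbor of $r_i$ at distance at least $\rad(G)$ from $c_{i+1}$, and the robber moves there. Without identifying this witness, the existence of the escape neighbor is merely asserted, not proved.
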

\begin{proof}
Let $k=\rad(G)$. By Theorem~\ref{thm:boundRad}, $\rc(G) \leq k-1$. To prove the reversed inequality we need a strategy for the robber that guarantees that at each time of the game the distance between the cop and the robber is at least $k-1$. Let $c_1$ be the vertex chosen by the cop in his first move. Since $k=\rad(G)$, there exists a vertex $x$ with $d(c_1,x)\geq k$. Let $r_1=x$ be the vertex chosen by the robber in his first move. Now, we will prove inductively that after each robber's move it holds true that $d(c_i,r_i)\geq k$, where $c_i,r_i$ denote the vertices occupied by the cop and the robber in their $i^{\text{th}}$-move, respectively. Suppose that $d(c_i, r_i)\geq k$ and let $c_{i+1}$ be the vertex occupied by the cop in his next move. If $d(c_{i+1},r_i) \geq k$, then robber has a strategy to stay in $r_i$, i.e.\ $r_{i+1}=r_i$. Otherwise, it holds that $d(c_{i+1},r_i)=k-1$. By the assumption, there exists an automorphism $\varphi: V(G) \to V(G)$ with the property $\varphi(c_{i+1})=r_i$ and $\varphi(r_i)=c_{i+1}$. Since $c_{i+1}$ has a neighbor $c_i$ that is at distance $k$ from $r_i$, it holds that $f(c_{i+1})=r_i$ has a neighbor $x=f(c_i)$ that is at distance $k$ from $f(r_i)=c_{i+1}$. Thus let $r_{i+1}=x$, which implies that $d(c_{i+1},r_{i+1}) =k$. Therefore after each robber's move the distance between both players is at least $k$ and hence $\rc(G) \geq k-1$.  
\end{proof}

Cayley graphs of Abelian groups have generously transitive automorphism groups~\cite{GR-2001}. Thus we get the following.

\begin{corollary}
    \label{cor:cayley}
    If $G$ is a Cayley graph of an Abelian group, 
    then $\rc(G) = \rad(G) - 1$.
\end{corollary}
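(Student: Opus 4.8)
The plan is to prove this corollary as an immediate consequence of \Cref{thm:strongAut} together with the cited fact that Cayley graphs of Abelian groups are generously transitive. The entire argument reduces to a single observation: once we know that such a graph $G$ falls into the class of generously-transitive graphs, \Cref{thm:strongAut} gives $\rc(G)=\rad(G)-1$ directly, so there is essentially nothing left to do beyond invoking the right structural property.

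First I would recall the setup. Let $\Gamma$ be an Abelian group and $S \subseteq \Gamma$ a connection set (closed under inverses, $0 \notin S$), and let $G = \mathrm{Cay}(\Gamma, S)$ be the associated Cayley graph, whose vertices are the elements of $\Gamma$ with $g \sim h$ iff $h - g \in S$. The key point to verify is generous transitivity: for any two vertices $u, v$ I must exhibit an automorphism $\varphi$ with $\varphi(u) = v$ and $\varphi(v) = u$ simultaneously. For Abelian groups the natural candidate is the affine map $\varphi(x) = (u + v) - x$, i.e.\ the ``reflection through the midpoint'' $u+v$. This map clearly swaps $u$ and $v$. To see it is a graph automorphism, note that $\varphi(x) - \varphi(y) = \big((u+v)-x\big) - \big((u+v)-y\big) = y - x = -(x-y)$; since $S = -S$, we have $x - y \in S \iff y - x \in S$, so $\varphi$ preserves adjacency (and being a bijection, it is an automorphism). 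This is exactly the content of the statement in~\cite{GR-2001} that such Cayley graphs are generously transitive, so one may simply cite it rather than rederive it.

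With generous transitivity in hand, the conclusion is immediate: \Cref{thm:strongAut} applies verbatim to give $\rc(G) = \rad(G) - 1$, which is what we wanted. I would write the proof as a one- or two-sentence deduction, since all the real work lives in \Cref{thm:strongAut} and in the cited structural fact.

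I do not anticipate any genuine obstacle here; the corollary is a direct specialization. If any care is needed, it is only in making sure the definition of Cayley graph being used assumes $S$ is symmetric (which is standard for undirected Cayley graphs) so that the midpoint-reflection argument goes through, and in confirming that the hypothesis of connectedness required by \Cref{thm:strongAut} is consistent with how $\mathrm{rad}$ is being used. Since the paper already restricts attention to connected graphs throughout, this is not a substantive concern, and the proof can be stated in its shortest form as a corollary of the preceding theorem.
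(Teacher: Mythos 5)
Your proposal is correct and matches the paper's proof exactly: the paper likewise derives this corollary in one step by citing \cite{GR-2001} for the fact that Cayley graphs of Abelian groups are generously transitive and then invoking \Cref{thm:strongAut}. Your explicit verification via the reflection $\varphi(x) = (u+v) - x$ is a sound (and standard) justification of the cited fact, but it adds nothing beyond what the paper assumes from the reference.
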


A graph $G$ is \emph{distance-transitive} if for any vertices $u,v,x,y \in V(G)$ for which $d(u,v)=d(x,y)$, there is an automorphism of $G$ that maps $u$ to $x$ and $v$ to $y$~\cite{BS-1971}.

\begin{lemma}\label{l:Distance-transitive}
    If $G$ is a distance-transitive graph, then it is generously-transitive.
\end{lemma}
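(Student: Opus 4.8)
The goal is to show that distance-transitivity implies generous-transitivity. The plan is to take two arbitrary vertices $u,v \in V(G)$ and produce an automorphism $\varphi$ that simultaneously swaps them, i.e. $\varphi(u)=v$ and $\varphi(v)=u$. The natural idea is to exploit the defining property of distance-transitivity by feeding it the ``right'' quadruple of vertices, so that the guaranteed automorphism happens to act as the desired swap.

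First I would set $d = d_G(u,v)$ and observe the trivial symmetry $d_G(v,u) = d_G(u,v) = d$. The key move is then to apply the definition of distance-transitivity to the quadruple $(u,v,v,u)$: since $d(u,v) = d(v,u) = d$, there exists an automorphism $\varphi$ of $G$ with $\varphi(u) = v$ and $\varphi(v) = u$. This is exactly the statement that $G$ is generously-transitive, because the pair $(u,v)$ was arbitrary. In short, the same distance is realized by the pair in both orders, and distance-transitivity upgrades this numerical coincidence into an actual automorphism witnessing the swap.

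I do not expect any genuine obstacle here; the argument is essentially a one-line application of the definition, and its only subtlety is purely notational---one must be careful to match the roles of the four vertices $u,v,x,y$ in the definition correctly so that ``maps $u$ to $x$ and $v$ to $y$'' instantiates to ``maps $u$ to $v$ and $v$ to $u$.'' The mild care needed is to confirm that generous-transitivity as defined (an automorphism $\varphi$ with $\varphi(v)=u$ and $\varphi(u)=v$) is literally what the instantiated definition delivers, which it is. One should also note at the outset that the claim is vacuous or immediate when $u=v$ (take the identity), though this case is already covered since $d(u,u)=0=d(u,u)$ yields the identity-compatible swap. Thus the whole proof reduces to invoking the definition with $x=v$ and $y=u$.
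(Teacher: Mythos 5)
Your proof is correct and matches the paper's argument exactly: both apply the definition of distance-transitivity to the pairs $(u,v)$ and $(v,u)$, which trivially realize the same distance, to obtain the swapping automorphism. No issues.
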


\begin{proof}
    Let $G$ be a distance-transitive graph and let $u,v\in V(G)$ be arbitrary. Denote by $i=d(u,v)$. Since $G$ is distance-transitive, for any two pairs of vertices at distance $i$, also for $(u,v)$ and $(v,u)$, there exist an automorphism that maps $u$ to $v$ and $v$ to $u$.   
\end{proof}

Theorem~\ref{thm:strongAut} and Lemma~\ref{l:Distance-transitive} imply the following result.

\begin{corollary}\label{thm:distance-transitive}
    If $G$ is a distance-transitive graph, then $\rc(G)=\rad(G)-1$. 
\end{corollary}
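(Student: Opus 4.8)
The plan is to prove this as an immediate consequence of the two results that precede it, since the substantive work has already been done. First I would invoke Lemma~\ref{l:Distance-transitive}, which guarantees that any distance-transitive graph $G$ is generously-transitive: given arbitrary $u,v \in V(G)$ with $i = d(u,v)$, distance-transitivity applied to the two pairs $(u,v)$ and $(v,u)$ (which have equal distance $i$) yields an automorphism that simultaneously sends $u \mapsto v$ and $v \mapsto u$, which is exactly the defining swapping property of generous transitivity. Then I would apply Theorem~\ref{thm:strongAut} to the generously-transitive graph $G$ to conclude that $\rc(G) = \rad(G) - 1$.

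There is essentially no obstacle here, because the real content lives in Theorem~\ref{thm:strongAut}. The key conceptual point underlying that chain is that the robber's strategy in Theorem~\ref{thm:strongAut} only ever needs, at a single critical moment, an automorphism that swaps the cop's current position $c_{i+1}$ with the robber's position $r_i$ (where $d(c_{i+1}, r_i) = k-1$); such an involution-like swap transports the neighbor $c_i$ of $c_{i+1}$ that lies at distance $k$ into a neighbor of $r_i$ at distance $k$ from $c_{i+1}$, giving the robber a safe retreat. Distance-transitivity is a strictly stronger hypothesis than what is used, but it is precisely what delivers these swapping automorphisms via Lemma~\ref{l:Distance-transitive}. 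Hence the corollary follows by composition, and the one-line verification that Lemma~\ref{l:Distance-transitive} and Theorem~\ref{thm:strongAut} combine is the entire proof.
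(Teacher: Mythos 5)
Your proposal is correct and matches the paper's own argument exactly: the paper derives this corollary as an immediate composition of Lemma~\ref{l:Distance-transitive} (distance-transitivity implies generous transitivity) with Theorem~\ref{thm:strongAut}. Your additional commentary on why generous transitivity suffices in the robber's strategy is accurate but not needed for the corollary itself.
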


Let $n > k > i$ be non-negative integers. The generalized Johnson graph, $J(n,k,i)$ is the graph whose vertices are the $k$-subsets of an $n$-set, where vertices $A$ and $B$ are adjacent whenever $|A \cap B|=i$. We will consider two special cases of generalized Johnson graphs,  the Johnson graphs and Kneser graphs. Johnson graphs are generalized Johnson graphs with $i=k-1$, i.e.\ two $k$-subsets $A,B$ of an $n$-set $X$ are adjacent if and only if $|A \cap B|=k-1$. Johnson graphs are denoted by $J(n,k)=J(n,k,k-1)$ and satisfy $\rad(J(n,k))=k$~\cite{HS-1993}. Moreover,
graph $J(n,k)$ is isomorphic to $J(n,n-k)$ and hence we may always assume that $k \leq \frac{n}{2}$. Since Johnson graphs are distance-transitive~\cite{HO-2007}, we get the following.

\begin{corollary}\label{thm:Johnson}
    Let $n$ be a positive integer and $1 \leq k \leq \frac{n}{2}$. Then $\rc(J(n,k))=k-1 =\rad(J(n,k))-1.$
\end{corollary}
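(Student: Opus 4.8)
The plan is to assemble the result directly from the machinery already developed in this section, so the argument is essentially a bookkeeping exercise. The key observation is that the Johnson graph $J(n,k)$ is distance-transitive; this is precisely the cited fact~\cite{HO-2007}. Granting this, Corollary~\ref{thm:distance-transitive} applies verbatim and yields $\rc(J(n,k)) = \rad(J(n,k)) - 1$ at once. (Equivalently, one first invokes Lemma~\ref{l:Distance-transitive} to upgrade distance-transitivity to generous-transitivity, and then Theorem~\ref{thm:strongAut}.)

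It then remains only to substitute the radius. By the cited value~\cite{HS-1993}, $\rad(J(n,k)) = k$ whenever $1 \le k \le \frac{n}{2}$. I would note here that the restriction $k \le \frac{n}{2}$ is harmless, since $J(n,k) \cong J(n,n-k)$, and that it is exactly this restriction that pins the radius down to $k$ rather than $n-k$. Substituting $\rad(J(n,k)) = k$ into the equation from the previous paragraph gives $\rc(J(n,k)) = k - 1 = \rad(J(n,k)) - 1$, which is the asserted chain of equalities.

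There is essentially no obstacle in this argument: both ingredients---the distance-transitivity of Johnson graphs and the value of their radius---are imported from the literature, while the only nontrivial step, the passage from distance-transitivity to $\rc(G) = \rad(G) - 1$, was already carried out via Lemma~\ref{l:Distance-transitive} and Theorem~\ref{thm:strongAut}. The one point I would explicitly record is connectivity, since Theorem~\ref{thm:strongAut} is stated for connected graphs; this causes no difficulty, as Johnson graphs are well known to be connected for $1 \le k \le \frac{n}{2}$. With that remark in place, the corollary follows.
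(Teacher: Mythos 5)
Your proposal is correct and follows essentially the same route as the paper: the paper likewise obtains this corollary by combining the distance-transitivity of Johnson graphs (via Lemma~\ref{l:Distance-transitive} and Theorem~\ref{thm:strongAut}, packaged as Corollary~\ref{thm:distance-transitive}) with the cited facts $\rad(J(n,k))=k$ and $J(n,k)\cong J(n,n-k)$. Your added remarks on connectivity and on how the restriction $k\leq \frac{n}{2}$ pins down the radius are harmless refinements of the same argument.
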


There are many other well-known vertex-transitive graph classes that are contained in the class of distance-transitive graphs, such as Grassmann graphs, odd graphs, hypercubes, Hamming graphs, triangular graphs, complete $k$-partite graphs, etc. Corollary~\ref{thm:distance-transitive} implies that for any such graph $G$ it holds that $\rc(G)=\rad(G)-1$.

Kneser graphs are generalized Johnson graphs with $i=0$. Hence vertices of Kneser graphs $K(n,k)=J(n,k,0)$ are $k$-subsets of the $n$-set $X$ and two vertices $A,B \in V(K(n,r))$ are adjacent if and only if $A \cap B=\emptyset$. Some Kneser graphs are distance-transitive, for example odd graphs. However, this is not true for all Kneser graphs and thus generalized Johnson graphs are not distance-transitive in general. Next, we prove that the bound from Theorem~\ref{thm:boundRad} is also tight for all generalized Johnson graphs.

\begin{theorem}\label{thm:GeneralizedJohnson}
   If $G$ is a generalized Johnson graph, then $\rc(G)=\rad(G)-1$. 
\end{theorem}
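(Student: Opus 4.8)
The plan is to prove that every generalized Johnson graph $G = J(n,k,i)$ satisfies $\rc(G) = \rad(G)-1$. By Theorem~\ref{thm:boundRad} we already have $\rc(G) \leq \rad(G)-1$, so the whole task is to exhibit a robber strategy guaranteeing that the distance between cop and robber never drops below $\rad(G)-1$. Following the template of the proof of Theorem~\ref{thm:strongAut}, the cleanest route would be to show that $J(n,k,i)$ is generously-transitive and then simply invoke Theorem~\ref{thm:strongAut}; but since the theorem is stated separately from the generously-transitive result, I expect the authors may instead give a direct combinatorial robber strategy, because not all $J(n,k,i)$ need be generously-transitive in a way that is immediately obvious.

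Accordingly, first I would recall the explicit description of distances in $J(n,k,i)$: a single step from a $k$-set $A$ replaces some elements of $A$ by elements outside $A$ so that the new set meets $A$ in exactly $i$ elements, and the graph distance $d(A,B)$ is a function only of $|A \cap B|$. I would compute this distance function and identify which intersection size realizes the eccentricity, so as to pin down $\rad(G)$ concretely. The key structural fact I would aim to establish is a symmetry statement: for any two vertices $A,B$ with $d(A,B) = \rad(G)$, the relevant local configuration around $A$ can be matched by one around $B$, which is exactly the sufficient condition flagged just after Theorem~\ref{thm:boundRad}, namely that for every $A' \in N[A]$ there is a $B' \in N[B]$ with $d(A',B') = \rad(G)$.

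The robber's strategy then mirrors the generously-transitive argument: the robber starts at a vertex $r_1$ with $d(c_1,r_1) \geq \rad(G)$ (such exists since $\ecc$ of $c_1$ is at least $\rad(G)$), and inductively maintains $d(c_i,r_i) \geq \rad(G)$. When the cop moves to $c_{i+1}$ with $d(c_{i+1},r_i) = \rad(G)-1$, the robber uses a suitable automorphism $\varphi$ of $J(n,k,i)$ swapping $c_{i+1}$ and $r_i$ (induced, for instance, by a permutation of the ground $n$-set together possibly with complementation when $k = n/2$), and moves to $\varphi(c_i)$, which is adjacent to $r_i = \varphi(c_{i+1})$ and lies at distance $\rad(G)$ from $c_{i+1} = \varphi(r_i)$. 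The main obstacle is verifying that the required distance-preserving involution always exists for generalized Johnson graphs even when they fail to be distance-transitive: one must argue that the symmetric group $S_n$ acting on the ground set (supplemented by complementation in the borderline case) already supplies an automorphism interchanging any two vertices, i.e.\ that $J(n,k,i)$ is generously-transitive. I would therefore devote the crux of the argument to showing that for any two $k$-sets $A,B$ there is a permutation of the $n$-set, or the composite of such a permutation with complementation, that swaps $A$ and $B$, handling the case $k = n/2$ (where complementation becomes available as an extra automorphism) separately from $k < n/2$.
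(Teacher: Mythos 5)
Your proposal follows essentially the same route as the paper: show that $J(n,k,i)$ is generously-transitive by exhibiting, for any two vertices $A,B$, a permutation of the ground $n$-set that swaps them, and then invoke Theorem~\ref{thm:strongAut}. The paper's permutation simply exchanges $A \setminus B$ with $B \setminus A$ elementwise and fixes all other elements---this preserves every intersection size, hence induces an automorphism swapping $A$ and $B$---so the complementation device and the separate case $k = n/2$ that you anticipate are unnecessary.
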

\begin{proof}
    Let $n > k > i$ be non-negative integers. By Theorem~\ref{thm:boundRad}, $\rc(G) \leq \rad(J(n,k,i))-1$. 

    Now, let $A$ and $B$ be arbitrary vertices of $J(n,k,i)$ and let $|A\cap B|=\ell$ for some $\ell \in \{0,1, \ldots, k-1\}$. Let $A=\{a_1, \ldots, a_\ell, a_{\ell+1}, \ldots, a_k\}$ and $B=\{a_1, \ldots, a_\ell, b_{\ell+1}, \ldots, b_k\}$. Define $f:\{1, \ldots, n\} \to \{1, \ldots, n\}$,
      
    \[
    f(x) = 
      \begin{cases}
        b_i, & \text{if } x=a_i \text{ for some } i \in \{\ell +1,\ldots ,k\},\\
        a_i, & \text{if } x=b_i \text{ for some } i \in \{\ell +1,\ldots ,k\},\\
        x, & \text{ otherwise.}
      \end{cases}
    \]

    Moreover, let $F:V(J(n,k,i)) \to V(J(n,k,i))$, $F(X)=\{f(x)\mid x \in X\}$.
    Clearly $f$ is a bijection. Since $|X \cap Y|=|F(X) \cap F(Y)|$ holds for any $X,Y \in V(J(n,k,i))$, $F$ maps edges to edges and non-edges to non-edges and thus $F$ is an automorphism. Since $F(A)=B$ and $F(B)=A$, we have proved that for arbitrary $A,B \in V(J(n,k,i))$, there exists an automorphism $F$ that maps $A$ to $B$ and $B$ to $A$. Therefore $J(n,k,i)$ is a generously-transitive and thus Theorem~\ref{thm:strongAut} implies that $\rc(J(n,k,i))=\rad(J(n,k,i))-1$. 
    %
    %
\end{proof}

There also exist vertex-transitive graphs $G$ with $\rc(G) \leq \rad(G)-2$. To see this we need the following lemma.

\begin{lemma}\label{l:distance}
    Let $G$ be a connected graph and $i \leq \rad(G)$ an non-negative integer. If for any two vertices $x, y \in V(G)$ such that $d(x,y)=i$ there exists $y' \in N(y)$ with $d(x,y')=i+1$, then $\rc(G) \geq i$.  
\end{lemma}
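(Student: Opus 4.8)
The plan is to give the robber an explicit evasion strategy guaranteeing that after each of his moves the distance to the cop is at least $i$, which by definition forces $\rc(G)\geq i$. The hypothesis gives us a ``one-step lifting'' property: whenever the current distance drops to exactly $i$, the robber can step to a neighbor that restores distance $i+1$. So the invariant I would maintain is that after the robber's $j$-th move, $d(c_j,r_j)\geq i$.

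First I would set up the initial move. Since $i\leq \rad(G)$, for the cop's first vertex $c_1$ there is a vertex $r_1$ with $d(c_1,r_1)\geq i$ (indeed $\ecc(c_1)\geq \rad(G)\geq i$), so the robber selects such an $r_1$, establishing the invariant for $j=1$. Then I would run the induction on the move number. Assume $d(c_j,r_j)\geq i$ and let $c_{j+1}$ be the cop's next position; since the cop moves to a neighbor or stays, $d(c_{j+1},r_j)\geq d(c_j,r_j)-1\geq i-1$. If $d(c_{j+1},r_j)\geq i$, the robber simply stays put, setting $r_{j+1}=r_j$. The only remaining case is $d(c_{j+1},r_j)=i-1$, which cannot happen if $i-1<i$ forces us to act\,---\,here I would instead consider the case where the distance is exactly $i$ and the robber must move before it can drop further.

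The key step is applying the hypothesis at the right moment. The cleaner formulation is to maintain $d(c_j,r_j)\geq i$ with equality handled by the lifting property: when $d(c_{j+1},r_j)=i$, apply the hypothesis to the pair $(x,y)=(c_{j+1},r_j)$ at distance $i$ to obtain a neighbor $y'=r_{j+1}\in N(r_j)$ with $d(c_{j+1},r_{j+1})=i+1\geq i$; when $d(c_{j+1},r_j)\geq i+1$ the robber stays. Thus after the robber's $(j+1)$-st move the distance is again at least $i$, closing the induction. Since the robber can perpetually keep $d(c_j,r_j)\geq i$, the cop never achieves distance at most $i-1$, so $G\notin{\cal{CWRC}}(i-1)$ and therefore $\rc(G)\geq i$.

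The main obstacle is bookkeeping the parity/timing of when the distance is allowed to equal $i-1$: the robber only moves after the cop, so I must be careful that the cop cannot ``sneak in'' a capture at radius $i-1$ in the instant between the cop's move and the robber's response. The hypothesis is designed exactly to rule this out, because it guarantees the robber can always react when the distance has dropped to the critical threshold, pushing it back up before the next cop move. I would therefore state the invariant in terms of the distance immediately after each robber move and verify explicitly that at no intermediate point (i.e., right after a cop move) does the distance fall below $i$\,---\,the worst case being a drop from $i$ to $i-1$, which the lifting property immediately repairs. Getting this case analysis exhaustive and consistent with the convention that both players may remain stationary is the only delicate part.
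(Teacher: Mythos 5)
Your overall approach---an explicit robber evasion strategy built on the lifting property, with an invariant maintained after each robber move---is the same as the paper's, and your ``cleaner formulation'' of the inductive step (lift when the cop's move brings the distance to exactly $i$, stay when it is at least $i+1$) is correct. But there is a genuine gap at the start of the game, and your discussion of the timing issue resolves it incorrectly. Your base case only places the robber at distance at least $i$ from $c_1$ (justified by $\ecc(c_1)\geq\rad(G)\geq i$). If $d(c_1,r_1)=i$ exactly, then the cop moves next, and by stepping along a shortest $c_1,r_1$-path he reaches distance $i-1$; in the game with radius of capture $i-1$ this is already a capture, and the robber never gets to respond. Your claim that the ``worst case'' of a drop from $i$ to $i-1$ is one ``which the lifting property immediately repairs'' is false: the hypothesis applies to pairs at distance exactly $i$, and once the distance after a cop move equals $i-1$ the game is over---there is nothing left to repair. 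What the lifting property actually does is prevent the distance from ever being exactly $i$ after a \emph{robber} move; it cannot undo a capture.

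The fix is to strengthen both the invariant and the base case. The correct invariant is: after each robber move the distance is at least $i+1$ (your inductive step already delivers exactly this, even though you only state $\geq i$), whence after each cop move it is at least $i$ and in particular never reaches $i-1$. For the base case you need an observation missing from your proposal: the hypothesis forces $\ecc(v)\geq i+1$ for every vertex $v$. Indeed, since $\ecc(c_1)\geq\rad(G)\geq i$ and $G$ is connected, some vertex $y$ lies at distance exactly $i$ from $c_1$, and the hypothesis gives $y'\in N(y)$ with $d(c_1,y')=i+1$. The robber therefore starts at such a vertex (the paper starts him at an antipode of $c_1$, which by this same observation is at distance at least $i+1$), and then the induction goes through, yielding $\rc(G)\geq i$.
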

\begin{proof}
    Let $i$ be a non-negative integer such that for any two vertices $x, y \in V(G)$ such that $d(x,y)=i$ there exists $y' \in N(y)$ with $d(x,y')=i+1$. We present a strategy for the robber to win the cop and robber game with radius of capture $i-1$. Regardless of the cop's first move, the robber selects the antipodal vertex of the vertex chosen by the cop. If in some step of the game, after the cop's move, the distance between both players is $i$, then robber in his next move selects its neighbor that is at distance $i+1$ from the cop, since such a vertex exists by assumption. Hence, the cop cannot capture the robber in the cop and robber game with radius of capture $i-1$ and therefore $\rc(G) \geq i$.
\end{proof}

Figure~\ref{fig:graph-cd-rad-2} shows a graph $G$ called \emph{CubicVT[24,6]} in the census of small connected cubic vertex-transitive graphs provided by Potočnik et.\ al.\ \cite{PoSp2013}. This graph has order 24 and radius 5. Moreover, for every vertex $v \in V(G)$ there exists exactly one vertex $u \in V(G)$ with $d(u, v)=5$. We first show that $G \in {\cal{CWRC}}(3)$. Let $c_1$ be the first cop's move. If the robber does not want to lose the game in the cop's next move, then the robber in his first move selects the vertex $r_1$ with $d(c_1, r_1)=5$. Next, in his second move the cop selects the vertex $c_2$ (see Figure~\ref{fig:graph-cd-rad-2}). Since $d(c_2, r) \leq 4$ holds for any $r \in N[r_1]$, the robber will, regardless of his next move, lose the cop and robber game with radius of capture 3 in the cop's next move. Thus, $G \in {\cal{CWRC}}(3)$ and consequently $\rc(G) \leq 3$. Since every vertex $x \in V(G)$ that is at distance 3 from $c_1$ has a neighbor $x'$ with $d(c_1, x')=4$, using that $G$ is vertex-transitive Lemma~\ref{l:distance} implies that $\rc(G) \geq 3$. Hence, the graph $G$ depicted in Figure~\ref{fig:graph-cd-rad-2} is an example of a vertex-transitive graph with $\rc(G)=\rad(G)-2$. Note, this graph is also a Cayley graph, thus not all Cayley graphs achieve the bound from Theorem~\ref{thm:boundRad}. 

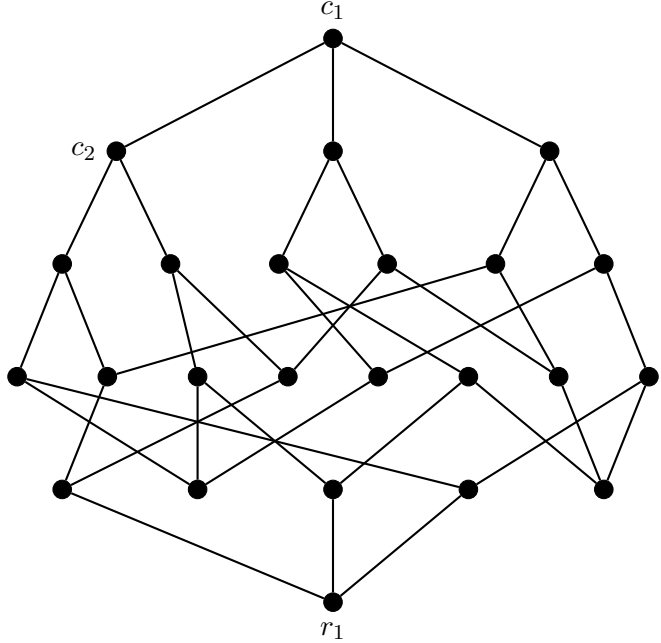
\begin{figure}[!ht]
    \centering
    \begin{tikzpicture}[scale=0.6, main_node/.style={circle, fill=black, draw, inner sep=0pt, minimum size=7pt]}]

\node[main_node] (0) at (0,12.5) [label=above:$c_1$]{};

\node[main_node] (1) at (-4.8, 10) [label=left:$c_2$]{};
\node[main_node] (2) at (4.8, 10) {};
\node[main_node] (3) at (0, 10) {};

\node[main_node] (18) at (3.6, 7.5) {};
\node[main_node] (19) at (1.2, 7.5) {};
\node[main_node] (20) at (-6, 7.5) {};
\node[main_node] (21) at (-3.6, 7.5) {};
\node[main_node] (22) at (6, 7.5) {};
\node[main_node] (23) at (-1.2, 7.5) {};

\node[main_node] (4) at (-6, 2.5) {};
\node[main_node] (6) at (6, 2.5) {};
\node[main_node] (7) at (-3, 2.5) {};
\node[main_node] (8) at (3, 2.5) {};
\node[main_node] (9) at (0, 2.5) {};

\node[main_node] (16) at (-1, 5) {};
\node[main_node] (10) at (-3, 5) {};
\node[main_node] (11) at (-7, 5) {};
\node[main_node] (17) at (-5, 5) {};
\node[main_node] (15) at (5, 5) {};
\node[main_node] (13) at (7, 5) {};
\node[main_node] (14) at (1, 5) {};
\node[main_node] (12) at (3, 5) {};

\node[main_node] (5) at (0, 0) [label=below:$r_1$]{};

 \path[draw, thick]
(0) edge node {} (1) 
(0) edge node {} (2) 
(0) edge node {} (3) 
(1) edge node {} (20) 
(1) edge node {} (21) 
(2) edge node {} (18) 
(2) edge node {} (22) 
(3) edge node {} (19) 
(3) edge node {} (23) 
(4) edge node {} (5) 
(4) edge node {} (16) 
(4) edge node {} (17) 
(5) edge node {} (8) 
(5) edge node {} (9) 
(6) edge node {} (12) 
(6) edge node {} (13) 
(6) edge node {} (15) 
(7) edge node {} (10) 
(7) edge node {} (11) 
(7) edge node {} (14) 
(8) edge node {} (11) 
(8) edge node {} (13) 
(9) edge node {} (10) 
(9) edge node {} (12) 
(10) edge node {} (21) 
(11) edge node {} (20) 
(12) edge node {} (23) 
(13) edge node {} (22) 
(14) edge node {} (22) 
(14) edge node {} (23) 
(15) edge node {} (18) 
(15) edge node {} (19) 
(16) edge node {} (19) 
(16) edge node {} (21) 
(17) edge node {} (18) 
(17) edge node {} (20) 
;

\end{tikzpicture}
    \caption{The graph \emph{CubicVT[24,6]} from the census of small connected cubic vertex-transitive graphs \cite{PoSp2013}.}
    \label{fig:graph-cd-rad-2}
\end{figure}

We have computed the radius capture number of all graphs available in the census of small connected cubic vertex-transitive graphs \cite{PoSp2013} and vertex-transitive graphs $G$ for which the difference $\rad(G)-\rc(G)$ is greater than 2 also exist. For example, the graph \emph{CubicVTgraph[56,12]} from the census was the highest order graph for which the radius capture number is equal to the half of the radius, namley $\rad(\text{\emph{CubicVTgraph[56,12]}})=6$ and $\rc(\text{\emph{CubicVTgraph[56,12]}})=3$. On the other hand, no tested graph had the radius capture number smaller than one half of the radius, moreover we found seven graphs from the census that achieve this lower bound. Hence, we propose the following question. 


\begin{question}
    If $G$ is a vertex-transitive graph, can the lower bound from Theorem \ref{thm:BoundGirth} be improved? Can it be proven that $\rc(G) \geq \frac{\rad(G)}{2}$?
\end{question}

We finish the section with the following question which is a special case of Problem~\ref{prob:1}.

\begin{question}
    Which vertex-transitive graphs $G$ have the radius capture number  equal to $\rad(G)-1$?
\end{question}

\section{Other graph classes}\label{sec:other}

\subsection{Outerplanar graphs}
\label{sec:outerplanar}

An {\em outerplanar} graph is a planar graph that can be embedded in the plane so that all its vertices lie on the same face, called the outer face. 

Note that outerplanar graphs $G$  with $\rc(G) \leq 1$ were characterized in~\cite{Dang-2011} as outerplanar graphs having all inner faces of length at most 5. In the next theorem we generalize this result and present explicit formula for $\rc(G)$, where $G$ is an arbitrary outerplanar graph. 

\begin{theorem}\label{thm:outerplanar}
    Let $G$ be an outerplanar graph embedded in the plane and $C$ a longest inner face of $G$. Then $\rc(G)= \left\lfloor \frac{|V(C)|}{2}\right\rfloor-1$.
\end{theorem}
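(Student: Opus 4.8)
Write $m=|V(C)|$ for the length of a longest inner face; the goal splits into the two inequalities $\rc(G)\ge\floor{m/2}-1$ and $\rc(G)\le\floor{m/2}-1$. Throughout I would reduce to the $2$-connected case: a single cop can permanently occupy any cut vertex and so first herd the robber into one block, and bridges create no inner faces, so it suffices to treat a $2$-connected outerplanar graph, i.e.\ a polygon with non-crossing chords, whose weak dual is a tree $T$ on the inner faces.

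For the \emph{lower bound} the crucial first step is to show that a longest inner face $C$ is an \emph{isometric} cycle of $G$, i.e.\ $d_G(a,b)=d_C(a,b)$ for all $a,b\in V(C)$. This follows from the tree structure of $T$: the subgraph hanging off an edge $e=xy$ of $C$ meets the rest of $G$ only in $\{x,y\}$, so a path between two vertices of $C$ can never shortcut through it. Given isometry, I would have the robber play only on $C$, maintaining $d_G(c_i,r_i)\ge\floor{m/2}$ after each of his moves, exactly as in Theorem~\ref{thm:BoundGirth}; since the cop's move drops the distance by at most one, this survives every capture radius up to $\floor{m/2}-2$ and gives $\rc(G)\ge\floor{m/2}-1$. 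The only case requiring a new idea is when the cop sits \emph{off} $C$ at distance exactly $\floor{m/2}-1$ from the robber's vertex $x_{j'}$: I must produce a $C$-neighbour of $x_{j'}$ at distance $\ge\floor{m/2}$. Where Theorem~\ref{thm:BoundGirth} invokes minimality of the cycle, here I would instead use outerplanarity: if both neighbours $x_{j'\pm1}$ were within distance $\floor{m/2}-1$, then shortest cop-to-$x_{j'\pm1}$ paths (which must avoid $x_{j'}$, else they would be too long) together with the two edges at $x_{j'}$ yield a short cycle forcing the long arc of $C$ off the outer face, contradicting that every vertex of an outerplanar graph lies on the outer face. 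Making this topological enclosure step precise (equivalently, verifying that the nearest-point projection onto the gated cycle $C$ is $1$-Lipschitz, so the robber can track the antipode of the cop's gate) is the technical heart of this direction.

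For the \emph{upper bound} I would give a cop strategy that confines the robber to a single face and then finishes there. Each internal edge $uv$ of $G$ is a $2$-separator whose two endpoints are adjacent, so a cop near $uv$ controls both ``gates'' between the two sides at once. The cop repeatedly moves toward the face currently containing the robber; on reaching an internal edge lying on the $T$-path to the robber he commits to the robber's side, and any later attempt to cross back is intercepted at $u$ or $v$. Each phase strictly shrinks the subtree of $T$ available to the robber, so after finitely many phases the robber is trapped in one face $C'$ with $|V(C')|\le m$, where the cop wins with radius $\floor{|V(C')|/2}-1\le\floor{m/2}-1$ using the single-cycle value $\rc(C_n)=\floor{n/2}-1$. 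Formally I would run this as induction on the number of inner faces: deleting a leaf face of $T$ gives an outerplanar $G'$ with one fewer face and longest face at most $m$; when that leaf face is even it is a retract of $G$, so Theorem~\ref{thm: retracts} lets the cop simulate the inductive strategy while treating the removed ear as a guarded dead end.

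I expect the \emph{upper bound} to be the main obstacle. The difficulty is twofold: first, odd leaf faces are not retracts (an odd cycle does not fold onto an edge), so the clean retraction simulation is unavailable and the herding must be argued directly; second, and more importantly, one must check that the capture radius $\floor{m/2}-1$ is never exceeded \emph{during} the herding phases and at the transition from ``chasing across $T$'' to the single-face endgame — in particular that a robber oscillating across a separating edge is genuinely caught within that radius rather than merely confined. I would isolate this as a lemma controlling the cop's distance to the robber's face throughout a phase, which combined with the cycle endgame yields $\rc(G)\le\floor{m/2}-1$ and hence, with the lower bound, equality.
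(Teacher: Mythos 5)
Your lower bound is essentially the paper's own argument. The paper defines the set of shadows $\sigma(v)$ (the nearest-point projection onto $C$), uses outerplanarity to note that $|\sigma(v)|\le 2$ and induces $K_1$ or $K_2$, records as a remark that along any edge $uv$ of $G$ the antipode-of-shadow set moves by at most one step on $C$, and has the robber track an antipode of the cop's shadow, staying at distance at least $\floor{|V(C)|/2}$ after each of his moves. Your ``the nearest-point projection onto $C$ is $1$-Lipschitz, so the robber tracks the antipode of the cop's gate'' is the same statement, and your isometry-of-$C$ step (via the tree structure of the weak dual) is the implicit reason the antipodal distance on $C$ is also the distance in $G$; the paper leaves that, and the Lipschitz remark, at the same informal level you do. This half matches.

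The upper bound is where your proposal has a genuine gap, and you correctly sense it. Your herding plan requires (a) a single cop who ``commits to the robber's side'' of a separating edge $uv$ yet still intercepts any later crossing at $u$ or $v$ --- but one cop cannot simultaneously advance toward the robber's face and guard the gate, and nothing in the sketch shows the interception happens within radius $\floor{|V(C)|/2}-1$; (b) the retract-based induction, which you yourself note is unavailable for odd leaf faces (the parity obstruction is real: an odd face-cycle admits no homomorphism onto its attachment edge fixing that edge, so Theorem~\ref{thm: retracts} cannot be invoked); and (c) the ``lemma controlling the cop's distance to the robber's face throughout a phase,'' which is precisely the content of the theorem and is left unproven. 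So as written this direction is a program with the hard steps missing, not a proof. The paper avoids all of it with a much simpler strategy: at every turn the cop moves one step along a shortest $c_i,r_i$-path in $G$ (its projection to the weak dual tree follows the tree path between the faces of the two players); outerplanarity then forces the cop and robber to eventually lie on a common inner face $F''$, at which point $d_G(c_i,r_i)\le\floor{|V(F'')|/2}\le\floor{|V(C)|/2}$, and the cop's next move ends the game with radius of capture $\floor{|V(C)|/2}-1$. In short: keep your lower bound, but replace the herding/induction machinery by greedy shortest-path pursuit guided by the dual tree.
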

\begin{proof}
    We first present a robber's strategy which guaranties that at every step of the game the distance between the cop and the robber is at least $\left\lfloor \frac{|V(C)|}{2}\right\rfloor-1$. Define a mapping $\sigma: V(G) \to 2^{V(C)}$ such that $\sigma$ maps $v \in V(G)$ to the set $\{x \in V(C) \mid d(v,C)=d(v,x)\}$. We call $\sigma(v)$ the \emph{set of shadows} of $v$. The cop will play the game optimally on $G$ and the robber will imagine the cop's moves on $C$ and respond by only playing on $C$. If the cop moves to $c_i \in V(G)$, then the robber imagines that the cop is on a vertex $c_i' \in \sigma(c_i)$ on $C$ and the robber's strategy is to move to an antipodal vertex $r_i \in V(C)$ of $c_i'$ on $C$. Since $C \subseteq G$, the cop continues playing on $G$ by responding to the robber's move $r_i$. Note, that by using this strategy, after each robber's move the distance between the cop and  the robber is at least $\left\lfloor \frac{|V(C)|}{2}\right\rfloor$. 

    We use induction on the number of moves to show that the robber can always move as described by the above strategy. First, since $G$ is outerplanar we note that for any $v \in V(G)$, it holds true that $|\sigma(v)| \leq 2$ and $G[\sigma(v)]$ is either isomorphic to $K_1$ or $K_2$. Hence, the set of antipodal vertices of all vertices in $\sigma(v)$ is of cardinality at most 2, if $C$ is of even length, and of cardinality at most 3, if $C$ is of odd length. Denote by $A(\sigma(v))$ the set of antipodal vertices (with respect to $C$) of all vertices in $\sigma(v)$. Since $G$ is outerplanar we can easily deduce the following remark.
    \begin{remark}\label{claim1}
        If $vu \in E(G)$, then for any $x \in A(\sigma(v))$ there exists $y \in A(\sigma(u))$ such that $y \in N[x]$.
    \end{remark}
    
    If $c_1$ is the vertex occupied by the cop in his first move and the robber responds by choosing an antipodal vertex of a vertex from $\sigma(c_1)$, say $r_1$, then $d(c_1,r_1) \geq \left\lfloor \frac{|V(C)|}{2}\right\rfloor$. Let $c_i$ be the vertex occupied by the cop in his $i^{\text{th}}$ move and let $r_i$ be an antipodal vertex of a shadow of $c_i$ that is occupied by the robber in his $i^{\text{th}}$ move (note that such vertex exists by the induction hypothesis). Let $c_{i+1}$ be the neighbor of $c_i$ occupied by the cop in his next move. Since $r_i \in A(\sigma(c_i))$ and $c_ic_{i+1} \in E(G)$, by Remark~\ref{claim1} there exists $r_{i+1} \in A(\sigma(c_{i+1}))$ such that $r_{i+1}\in N[r_i]$. Hence, if the robber moves to $r_{i+1}$, he has a strategy to be at distance $\left\lfloor \frac{|V(C)|}{2}\right\rfloor$ from the cop after each of his moves and thus to stay at distance $\left\lfloor \frac{|V(C)|}{2}\right\rfloor-1$ from the cop at each time of the game. Thus $\rc(G) \geq \left\lfloor \frac{|V(C)|}{2}\right\rfloor-1$.

    For the converse, let $G^*$ be the dual of $G$ and let $G'=G^*-\{F\}$, where $F$ is a vertex of $G^*$ that represents the outer face of $G$. Then $G'$ is a tree~\cite{Syslo-1979}. We will present a strategy for the cop which will ensure that cop wins the cop and robber game with radius of capture $\left\lfloor \frac{|V(C)|}{2}\right\rfloor-1$. Let $c_1$ and $r_1$ be the vertices occupied by the cop and robber in their first moves, respectively. Let $F_1$ be a face containing $c_1$ and $F_1'$ a face containing $r_1$. Let $R_1'$ be the shortest $F_1,F_1'$-path in $G'$ and let $R_1$ be a shortest $c_1,r_1$-path in $G$. Then cop has the strategy to move from $c_1$ to its neighbor on $R_1.$ In a similar way, for any $i>1$, let $c_i$ and $r_i$ be the vertices occupied by the cop and the robber in their $i^{\text{th}}$ move, respectively. Let $F_i$ be a face containing $c_i$ and $F_i'$ a face containing $r_i$. Let $R_i'$ be the shortest $F_i,F_i'$-path in $G'$ and let $R_i$ be a shortest $c_i,r_i$-path in $G$. Then the cop's strategy is to move from $c_i$ to its neighbor on $R_i$. Note, this vertex is on the face $F'$ which lies on $R_i'$. Since $G$ is outerplanar there exists $i$ such that $c_i$ and $r_i$ are on the same inner face of $G$. Thus $d_G(c_i,r_i) \leq \left\lfloor \frac{|V(F'')|}{2}\right\rfloor \leq \left\lfloor \frac{|V(C)|}{2}\right\rfloor$. Hence, after the cop's next move the distance between the cop and the robber is at most $\left\lfloor \frac{|V(F'')|}{2}\right\rfloor-1$, which implies that $\rc(G) \leq \left\lfloor \frac{|V(C)|}{2}\right\rfloor -1$.  
\end{proof}

\subsection{Sierpiński graphs}
\label{sec:sierpinski}

For integers $n \geq 0$ and $k \geq 1$, the \emph{Sierpiński graph} $S(n,k)$ is the graph with vertex set $V(S(n,k)) = [k]^n$ and edge set defined recursively as $E(S(0,k)) = \emptyset$ and $$E(S(n,k)) = \{ \{ix, iy\} \mid i \in [k], \{x,y\} \in E(S(n-1,k)) \} \cup \{ \{ij^{n-1}, ji^{n-1}\} \mid i,j \in [k], i \neq j \}.$$ For example, $S(1,3) \cong K_3$ and $S(3,3)$ is presented in Figure \ref{fig:sierpinski}. For more on Sierpiński graphs see for example~\cite{HKZ-2017}, and for results about the cop number of Sierpiński graphs, see~\cite{CA-2024}.

\begin{figure}[!ht]
    \centering
    \begin{tikzpicture}[scale=0.8, main_node/.style={circle, fill=black, draw, inner sep=0pt, minimum size=5pt]}]

\node[main_node] (111) at (4.5, 7.8) [label=above:\small$111$]{};
\node[main_node] (222) at (0,0) [label=below:\small$222$]{};
\node[main_node] (333) at (9,0) [label=below:\small$333$]{};

\node[main_node] (211) at (1.5, 2.6) [label=left:\small$211$]{};
\node[main_node] (122) at (3, 5.2) [label=left:\small$122$]{};
\node[main_node] (311) at (7.5, 2.6) [label=right:\small$311$]{};
\node[main_node] (133) at (6, 5.2) [label=right:\small$133$]{};
\node[main_node] (233) at (3,0) [label=below:\small$233$]{};
\node[main_node] (322) at (6,0) [label=below:\small$322$]{};

\node[main_node] (223) at (1,0) [label=below:\small$223$]{};
\node[main_node] (232) at (2,0) [label=below:\small$232$]{};
\node[main_node] (323) at (7,0) [label=below:\small$323$]{};
\node[main_node] (332) at (8,0) [label=below:\small$332$]{};

\node[main_node] (221) at (0.5, 0.87) [label=left:\small$221$]{};
\node[main_node] (212) at (1, 1.73) [label=left:\small$212$]{};
\node[main_node] (231) at (2.5, 0.87) [label=right:\small$231$]{};
\node[main_node] (213) at (2, 1.73) [label=right:\small$213$]{};

\node[main_node] (121) at (3.5, 6.07) [label=left:\small$121$]{};
\node[main_node] (112) at (4, 6.93) [label=left:\small$112$]{};
\node[main_node] (131) at (5.5, 6.07) [label=right:\small$131$]{};
\node[main_node] (113) at (5, 6.93) [label=right:\small$113$]{};

\node[main_node] (123) at (4, 5.2) [label=below:\small$123$]{};
\node[main_node] (132) at (5, 5.2) [label=below:\small$132$]{};

\node[main_node] (331) at (8.5, 0.87) [label=right:\small$331$]{};
\node[main_node] (313) at (8, 1.73) [label=right:\small$313$]{};
\node[main_node] (321) at (6.5, 0.87) [label=left:\small$321$]{};
\node[main_node] (312) at (7, 1.73) [label=left:\small$312$]{};

\path[draw, thick]
(111) edge node {} (222) 
(222) edge node {} (333) 
(333) edge node {} (111) 
(122) edge node {} (133)
(211) edge node {} (233)
(322) edge node {} (311)
(221) edge node {} (223)
(212) edge node {} (213)
(232) edge node {} (231)
(121) edge node {} (123)
(112) edge node {} (113)
(132) edge node {} (131)
(321) edge node {} (323)
(312) edge node {} (313)
(332) edge node {} (331)
;

\end{tikzpicture}
    \caption{The Sierpiński graph $S(3,3)$.}
    \label{fig:sierpinski}
\end{figure}
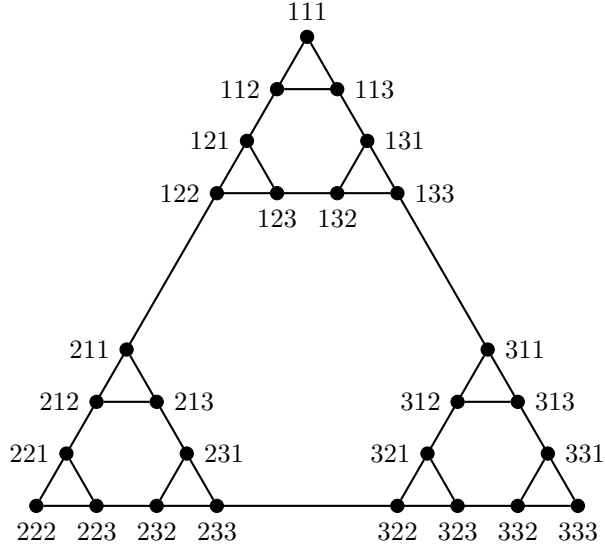

\begin{theorem}
    \label{thm:sierpinski-3}
    If $n \geq 3$ is an integer, then $\rc(S(n,k)) = 3 \cdot 2^{n-2} - 1$.
\end{theorem}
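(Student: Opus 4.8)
The plan is to prove the two inequalities $\rc(S(n,3))\le 3\cdot 2^{n-2}-1$ and $\rc(S(n,3))\ge 3\cdot 2^{n-2}-1$ separately, the first by an exact computation of the radius and the second by an explicit evasion strategy for the robber. (I read the statement as concerning $S(n,3)$, consistent with the abstract: the value $3\cdot 2^{n-2}-1$ equals $\rad(S(n,3))-1$ and is genuinely $k$-independent only when $k=3$, since for $k\ge 4$ a central vertex can no longer stay close to all $k$ corners.)

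For the upper bound I would first pin down the radius. Writing a vertex as a word $v=v_1\cdots v_n\in[3]^n$ and a corner as $j^n$, the fact that the only edge leaving a copy is its single gate gives, by induction on $n$ along the three-copy decomposition, the closed form
\[
 d_{S(n,3)}(v,j^n)=\sum_{t\,:\,v_t\neq j}2^{\,n-t}=(2^n-1)-\sum_{t\,:\,v_t=j}2^{\,n-t}.
\]
Hence $\max_j d(v,j^n)=(2^n-1)-\min_j S_j(v)$ with $S_j(v)=\sum_{t:\,v_t=j}2^{\,n-t}$, and minimising eccentricity over $v$ amounts to distributing the weights $2^{n-1},2^{n-2},\dots,2^{0}$ into three bins so as to maximise the smallest bin. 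Since the largest weight $2^{n-1}$ must occupy one bin alone, the two remaining bins can receive at best $2^{n-2}$ and $2^{n-2}-1$, so $\min_j S_j$ is maximised with value $2^{n-2}-1$ by a balanced word such as $v^{\star}=1\,2\,3^{\,n-2}$. This gives $\min_v\max_j d(v,j^n)=3\cdot 2^{n-2}$. Finally I would show (again by induction on $n$, or by invoking the known eccentricity formula for Sierpi\'nski graphs) that the farthest vertex from any vertex is always a corner, whence $\ecc(v^{\star})=\max_j d(v^{\star},j^n)=3\cdot 2^{n-2}$ and $\rad(S(n,3))=3\cdot 2^{n-2}$. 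Theorem~\ref{thm:boundRad} then yields $\rc(S(n,3))\le\rad-1=3\cdot 2^{n-2}-1$.

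For the lower bound the robber must keep the distance at least $\rad-1$ forever. Note that Lemma~\ref{l:distance} cannot be applied with $i=\rad-1$: if the cop sits at a neighbour of a corner's gate (e.g.\ $132$ in $S(3,3)$) while the robber is at the corner $333$, the corner has no neighbour at distance $\rad$, so the hypothesis fails. Instead I would exploit the triangular arrangement of the three top copies and let the robber \emph{circulate}. The invariant to maintain is: cop and robber lie in different copies, say the cop in copy $c$ and the robber in copy $r$, and inside copy $r$ the robber holds a position that is recursively safe (by the induction hypothesis for the $S(n-1,3)$ inside copy $r$) relative to the ``shadow'' of the cop, namely the gate $r c^{n-1}$ through which the cop would enter. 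Because the two gates $c r^{n-1},c t^{n-1}$ of copy $c$ are $2^{n-1}-1$ apart, the cop can be close to at most one of them at a time; whenever it approaches the gate threatening the robber, the robber crosses the gate edge joining copies $r$ and $t$ into the third copy $t$, which instantly restores the distance to at least $\rad$ (the cop is then $\approx 2^{n-1}$ from the new relevant gate). The arithmetic making the bound tight is $D_n=2D_{n-1}+1$ for $D_m:=3\cdot 2^{m-2}-1$, mirroring the doubling $2^m-1=2(2^{m-1}-1)+1$ of the gate-to-corner distances.

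The delicate point, which I expect to be the main obstacle, is synchronising this ``macro'' circulation with the ``micro'' recursive positioning inside a copy, because of the shortcut through the third copy: the robber sitting at the gate $r t^{n-1}$ is only $2^{n-1}+1$ from a cop at the opposite gate $c t^{n-1}$, which is below $\rad-1$ as soon as $n\ge 4$. This is precisely why the within-copy position must not be a bare gate but a recursively chosen vertex that is simultaneously far from the entry gate and still able to reach a fleeing gate in time; proving that the two timescales never collide (the distance drops to at most $\rad-1$ only on the cop's move and is pushed back to $\ge\rad$ on the robber's immediate reply) is the crux, carried out by induction on $n$. The base case $n=3$ is special: there the inner copies are the cop-win graph $S(2,3)$, so no recursive evasion is available, but the shortcut distance $2^{n-1}+1$ equals $\rad-1=5$ exactly, so the plain gate-pivot strategy (the robber oscillating across one gate edge, e.g.\ $322\leftrightarrow 233$, and rotating to another gate edge when the cop switches which gate it threatens) already keeps the distance $\ge 5$, and this finite case can be checked directly. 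Combining the two bounds gives $\rc(S(n,3))=3\cdot 2^{n-2}-1$.
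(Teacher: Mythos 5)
Your upper bound is fine and is essentially the paper's: the paper simply cites Parisse for $\rad(S(n,3)) = 3\cdot 2^{n-2}$ and applies Theorem~\ref{thm:boundRad}, while you re-derive the radius via the corner-distance formula (correct, though your step ``the farthest vertex is always a corner'' is itself only sketched). The genuine gap is in the lower bound, and you name it yourself: the ``crux'' of synchronising the macro-circulation between copies with the recursive positioning inside a copy is asserted to be ``carried out by induction on $n$'' but is never carried out, and it is exactly the hard part of the theorem. Moreover, the invariant you propose is too weak in principle, not just unverified: you keep the robber ``recursively safe relative to the gate $rc^{n-1}$'', but the cop has \emph{two} entry routes into the robber's copy (directly through $rc^{n-1}$, or around through the third copy via $ct^{n-1}$ and $tr^{n-1}$), so the robber's position must control two gate-distances simultaneously, with weights that change at every cop move. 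An induction hypothesis of the form $\rc(S(n-1,3)) = 3\cdot 2^{n-3}-1$ only yields safety margins of order $2^{n-3}$ inside one copy, which by itself cannot certify the global bound $3\cdot 2^{n-2}-1$; and, as you yourself compute, parking at a gate is fatal for $n \geq 4$. So as written the proposal does not prove $\rc(S(n,3)) \geq 3\cdot 2^{n-2}-1$. (Your observation that Lemma~\ref{l:distance} is inapplicable with $i=\rad(G)-1$ is correct and shows you see why something cleverer is needed.)

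The paper resolves precisely this difficulty with a different device, which removes all timing issues at once. It exhibits an induced cycle $C$ of even length $3\cdot 2^{n-1}$, namely the union over the three copies of the path of vertices avoiding that copy's own symbol (i.e.\ $\{1x \mid 1\notin x\}\cup\{2x\mid 2\notin x\}\cup\{3x\mid 3\notin x\}$) together with the three connecting edges, and constructs recursively a map $\sigma\colon V(S(n,3))\to V(C)$ that fixes $C$ pointwise and sends adjacent vertices to equal or adjacent vertices. The robber then plays the antipodal strategy on $C$ against the shadow $\sigma(c)$ of the cop: since images of paths under $\sigma$ are walks on $C$, one gets $d(c,r)\geq d_C(\sigma(c),r)\geq \frac{|C|}{2}-1 = 3\cdot 2^{n-2}-1$ at every moment of the game. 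In effect, formulating an invariant strong enough to handle both entry routes in your circulation scheme is equivalent to building such a distance-nonincreasing shadow map, so if you want to complete your approach, that is the object you would need to construct.
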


\begin{proof}
    It is known \cite{Pa2009} that $\rad(S(n,k)) = (2^{k-1} - 1) 2^{n-k+1}$ if $n \geq k$, thus $\rad(S(n,3)) = 3 \cdot 2^{n-2}$. Thus by Theorem \ref{thm:boundRad} we know that $\rc(S(n,k)) \leq 3 \cdot 2^{n-2} - 1$. To prove the equality we provide a strategy for the robber.

    Consider the following map $\sigma^1_n$ from $V(S(n,3))$ to $B = \{x \in V(S(n,3)) \mid 1 \notin x\}$. Let $\sigma^1_n(1^n) = 2 3^{n-1}$. For $11x \in V(S(n,3))$, if $11x$ is closer to $12^{n-1}$ than to $13^{n-1}$, then $\sigma^1_n(11x) = 2 3^{n-1}$, and otherwise, $\sigma^1_n(11x) = 32^{n-1}$. For $12x \in V(S(n,3))$, if $12x$ is at distance $d$ from $12^{n-1}$, then $\sigma^1_n(12x)$ is the vertex in $\{ 2y \in V(S(n,k)) \mid 1 \notin y \}$ that is at distance $2^{n-2} - 1 - d$ from $23^{n-1}$ (note that $\max \{ d(12x, 12^{n-1}) \mid x \in S(n-2,3) \} = 2^{n-2}-1$). For $13x \in V(S(n,3))$, if $13x$ is at distance $d$ from $13^{n-1}$, then $\sigma^1_n(13x)$ is the vertex in $\{ 3y \in V(S(n,k)) \mid 1 \notin y \}$ that is at distance $2^{n-2} - 1 - d$ from $32^{n-1}$. For $jx \in V(S(n,k))$, $j \neq 1$, $\sigma^1_n(jx) = j \sigma^1_{n-1}(x)$. We also define $\sigma^1_1(1) = \sigma^1_1(2) = 2$ and $\sigma^1_1(3) = 3$. Observe that if $xy \in E(S(n,3))$, then $\sigma^1_n(x) = \sigma^1_n(y)$ or $\sigma^1_n(x) \sigma^1_n(y) \in E(S(n,3)[B])$. Notice also that analogously we can define $\sigma^2_n \colon V(S(n,3)) \to \{x \in V(S(n,3)) \mid 2 \notin x\}$ and $\sigma^3_n \colon V(S(n,3)) \to \{x \in V(S(n,3)) \mid 3 \notin x\}$.

    The following vertices in $S(n,3)$ induce a cycle $C$: $\{1x \mid 1 \notin x\} \cup \{2x \mid 2 \notin x\} \cup \{3x \mid 3 \notin x \}$. Cycle $C$ is of length $3 \cdot 2^{n-1}$. Recall that $S(n,3)$ consists of three copies of $S(n-1,3)$ together with edges $\{12^{n-1}, 21^{n-1}\}$, $\{23^{n-1}, 32^{n-1}\}$ and $\{31^{n-1}, 13^{n-1}\}$ connecting them, and that $C$ contains all these three connecting edges and a path from each copy of $S(n-1,3)$. Define the map $\sigma \colon V(S(n,3)) \to C$ as $$\sigma(x_1 \ldots x_n) = x_1 \sigma^{x_1}_{n-1} (x_2 \ldots x_n).$$ It follows from the properties of $\sigma^i_n$ that if $xy \in E(S(n,3))$, then $\sigma(x) = \sigma(y)$ or $\sigma(x) \sigma(y) \in E(S(n,3)[C])$.

    As $C$ is a cycle of even length, every vertex on $C$ has a well-defined antipodal vertex. Robber's strategy when the game is played on $S(n,3)$ is the following: if cop is on vertex $c$, robber moves to (or stays on) the antipodal vertex of $\sigma(c)$ on $C$. The properties of $\sigma$ imply that this strategy is well-defined and that $d(c,r) \geq d(\sigma(c), \sigma(r))$ for every position $c$ of the cop and $r$ of the robber. Thus robber is always at least $\frac{|C|}{2} - 1 = 3 \cdot 2^{n-2} - 1$ away from the cop.
\end{proof}

Note that Theorem \ref{thm:sierpinski-3} essentially states that $\rc(S(n,3)) = \rad(S(n,3)) - 1$. It is not hard to see that this holds for $n \in \{0,1,2\}$ as well using that $\rad(S(n,k)) = 2^n - 1$ if $n < k$ (\cite{Pa2009}).

For $k\geq 4$ we computed (using computer) the radius capture number  for some small values of $n$. We noticed that $\rc(S(n,k))=\rad(S(n,k))-1$ does not hold in general. For example, $\rc(S(3,4)) = 5 \neq 6 = \rad(S(3,4))-1$ and $\rc(S(4,4)) = 11 \neq 13 = \rad(S(4,4))-1$. We wonder how the radius capture number  number behaves for $k \geq 4$ and thus finish the section by posing the following question.

\begin{question}
    What is $\rc(S(n,k))$ for $k \geq 4$?
\end{question}

\subsection{Harmonic even graphs}\label{sec:even}

A connected graph is called \emph{even} if, for any vertex $v \in V (G)$, there exists a unique vertex $v' \in V(G)$ such that $d(v,v') = \diam(G)$. Hence, for every even graph $G$ it holds true that $\rad(G)=\diam(G)$. An even graph is called \emph{harmonic even}, if $uv\in E(G)$ whenever $u'v' \in E(G)$ for all $u,v \in V(G)$, see \cite{GoVe86} for results on even and harmonic even graphs. We will also need the following result.

\begin{lemma}[\cite{GoVe86}]\label{lemma:evenD}
If $u$ and $v$ are adjacent vertices of an even graph $G$ of diameter $d$, then $d(u,v')=d-1$.
\end{lemma}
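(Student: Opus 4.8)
The plan is to pin $d(u,v')$ down to the two-element set $\{d-1,d\}$ using only the triangle inequality together with the diameter bound, and then to exclude the value $d$ by invoking the uniqueness clause in the definition of an even graph.

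First I would record the elementary bounds. Since $d(v,v')=\diam(G)=d$ and $uv\in E(G)$ gives $d(u,v)=1$, the triangle inequality yields $d(u,v')\geq d(v,v')-d(u,v)=d-1$. On the other hand $d(u,v')\leq d$ because $d$ is the diameter of $G$. Hence $d(u,v')\in\{d-1,d\}$, and the whole lemma reduces to ruling out the possibility $d(u,v')=d$.

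Next I would exploit the defining property of even graphs, namely that each vertex has a \emph{unique} vertex at distance $d$ from it. The antipode relation is then an involution: if $v'$ is the unique vertex with $d(v,v')=d$, then $d(v',v)=d$ exhibits $v$ as a vertex at distance $d$ from $v'$, so by uniqueness of the antipode of $v'$ we get $(v')'=v$; in particular $v$ is \emph{the} vertex realizing the diameter from $v'$. Now suppose, for contradiction, that $d(u,v')=d$. Then $u$ is also a vertex at distance $d$ from $v'$, and since $u\neq v$ (they are adjacent), this contradicts the uniqueness of the diametral vertex of $v'$. Therefore $d(u,v')=d-1$.

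The argument is short, so I do not expect a genuine computational obstacle; the only subtlety I would be careful about is the symmetry of the antipode relation, i.e.\ that the vertex realizing the diameter from $v'$ is genuinely unique and equals $v$. This follows immediately from the symmetry of the distance function, but it is the single place where the \emph{uniqueness} hypothesis (rather than mere existence of a diametral vertex) is actually used, so it is the step I would state most explicitly.
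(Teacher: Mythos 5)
Your proof is correct. Note that the paper does not prove this statement at all: it is imported as a known result from G\"obel and Veldman's paper on even graphs (the reference \cite{GoVe86}), so there is no in-paper argument to compare against. Your self-contained argument is sound: the triangle inequality together with $d(u,v')\leq\diam(G)$ pins $d(u,v')$ to $\{d-1,d\}$, and the value $d$ is excluded because otherwise $u$ and $v$ would be two distinct vertices at distance $d$ from $v'$, contradicting the uniqueness clause in the definition of an even graph applied to the vertex $v'$. One small remark: the involution step $(v')'=v$ you single out is not actually needed as a separate lemma --- the contradiction only requires that $v'$ has a \emph{unique} vertex at distance $d$ from it, which is immediate from the definition; identifying that unique vertex as $v$ is a byproduct rather than a prerequisite.
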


\begin{proposition}\label{prop:harmonic-even}
If $G$ is a harmonic even graph, then $\rc(G)=\rad(G)-1$.
\end{proposition}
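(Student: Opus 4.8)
The plan is to combine the upper bound of Theorem~\ref{thm:boundRad} with a matching lower bound obtained from an explicit robber strategy. Writing $d=\diam(G)$, recall that for even graphs $\rad(G)=\diam(G)=d$, so Theorem~\ref{thm:boundRad} already gives $\rc(G)\le d-1$. It therefore remains to prove $\rc(G)\ge d-1$, i.e.\ to exhibit a robber strategy guaranteeing that the cop can never come within distance $d-2$, which shows $G\notin{\cal{CWRC}}(d-2)$.

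First I would establish the structural fact that the antipodal map $\varphi\colon v\mapsto v'$ is an automorphism of $G$. Since $v'$ is the \emph{unique} vertex at distance $d$ from $v$ and distance is symmetric, $v''=v$, so $\varphi$ is an involution and in particular a bijection. The harmonic even hypothesis states that $uv\in E(G)$ whenever $u'v'\in E(G)$; applying this with the pair $(u',v')$ in place of $(u,v)$ and using $u''=u$, $v''=v$ yields the reverse implication. Hence $uv\in E(G)$ if and only if $u'v'\in E(G)$, so $\varphi$ preserves edges and non-edges, i.e.\ it is an automorphism. This biconditional is the hinge of the argument: without the upgrade from the one-directional harmonic condition to the equivalence, $\varphi$ would not be a legitimate automorphism and the shadow move below would not be guaranteed to be available to the robber.

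The robber's strategy is then to shadow the cop through $\varphi$: whenever the cop occupies $c_i$, the robber occupies $r_i=c_i'$. I would argue feasibility step by step. The robber opens by placing $r_1=c_1'$, so $d(c_1,r_1)=d$. If the cop then moves from $c_i$ to $c_{i+1}$, where either $c_{i+1}=c_i$ or $c_ic_{i+1}\in E(G)$, then because $\varphi$ is an automorphism we have $c_{i+1}'=c_i'$ or $c_i'c_{i+1}'\in E(G)$; in either case the robber can legally move from $r_i=c_i'$ to $r_{i+1}=c_{i+1}'$. Thus after each robber move the distance is exactly $d(c_i,c_i')=d$. To verify the capture condition I would examine the position right after a cop move: $d(c_{i+1},r_i)=d(c_{i+1},c_i')$ equals $d$ when $c_{i+1}=c_i$ and equals $d-1$ when $c_{i+1}\sim c_i$, the latter by Lemma~\ref{lemma:evenD}. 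In every configuration the cop--robber distance is therefore at least $d-1$, so the cop never reaches distance $d-2$.

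Putting the two bounds together gives $\rc(G)=d-1=\rad(G)-1$. I do not anticipate a serious obstacle; the only point requiring genuine care is the verification that the harmonic even condition indeed promotes to the equivalence $uv\in E(G)\iff u'v'\in E(G)$, which is what makes $\varphi$ an automorphism and thereby legitimizes the robber's mirror move. Once that is in place, the distance bookkeeping is routine given Lemma~\ref{lemma:evenD}.
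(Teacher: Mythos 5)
Your proposal is correct and follows essentially the same route as the paper: the upper bound comes from Theorem~\ref{thm:boundRad} (which the paper re-derives in place via Lemma~\ref{lemma:evenD}), and the lower bound uses the antipodal shadow strategy, with the robber always occupying $c_i'$ so that the distance is $d$ after each robber move and $d-1$ after each cop move. Your explicit verification that the harmonic condition, combined with the involution $v''=v$, upgrades to the biconditional $uv\in E(G)\iff u'v'\in E(G)$ is a point the paper uses only implicitly, so spelling it out is a welcome (but not divergent) refinement.
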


\begin{proof}
    Let $G$ be a harmonic even graph. If the cop starts in the vertex $v\in V(G)$, then the robber can start in $v'$, hence $d(v',v)=\rad(G)$. When the cop moves to an adjacent vertex $u$, by Lemma \ref{lemma:evenD} $d(u,v')=\rad(G)-1$. Hence, $\rc(G)\leq \rad(G)-1$. Next, since $uv\in E(G)$ and $G$ is harmonic even, there exists $u'$ such that $v'u'\in E(G)$. The robber's next step is to move to $u'$ and increase the distance to the cop to $\rad(G)$. The robber can keep doing this strategy for any vertex to which the cop moves, therefore $\rc(G)\geq \rad(G)-1$. 
\end{proof}

Let $n$ be a positive integer. The hypercube of order $n$ is the graph $Q_n$, with the vertex set equal to all binary strings of length $n$ and in which two vertices are adjacent whenever the Hamming distance between the corresponding strings is exactly one. It is well-known that $\diam(Q_n) = n$. Also, hypercubes are harmonic even graphs \cite{GoVe86}. The following corollary is immediate consequence of this fact and Proposition \ref{prop:harmonic-even}.

\begin{corollary}
If $Q_n$ is the hypercube of order $n$, then $\rc(Q_n)=n-1$.
\end{corollary}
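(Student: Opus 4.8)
The plan is to prove the corollary as an immediate consequence of two already-established facts: first, that hypercubes $Q_n$ are harmonic even graphs, and second, Proposition~\ref{prop:harmonic-even}, which asserts that $\rc(G) = \rad(G) - 1$ for any harmonic even graph $G$. So the only real content I need to supply is the value of $\rad(Q_n)$, after which the result is purely mechanical.

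First I would recall from the paragraph preceding the corollary that $\diam(Q_n) = n$ and that $Q_n$ is harmonic even (citing \cite{GoVe86}). Since an even graph satisfies $\rad(G) = \diam(G)$ --- a fact noted explicitly in the definition of even graphs, because every vertex $v$ has a unique antipode $v'$ at distance $\diam(G)$, forcing all eccentricities to equal the diameter --- I can conclude $\rad(Q_n) = \diam(Q_n) = n$. This is the step where I should be careful to invoke the correct reason: I want $\rad(Q_n) = n$, and the cleanest justification is that harmonic even (hence even) graphs have equal radius and diameter, so $\rad(Q_n) = \diam(Q_n) = n$.

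Then I would apply Proposition~\ref{prop:harmonic-even} directly. Since $Q_n$ is harmonic even, the proposition gives $\rc(Q_n) = \rad(Q_n) - 1$, and substituting $\rad(Q_n) = n$ yields $\rc(Q_n) = n - 1$, as claimed.

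There is essentially no obstacle here; the work has all been done in Proposition~\ref{prop:harmonic-even} and in the cited structural facts about hypercubes. The only point requiring a moment's attention is to justify that $\rad(Q_n) = n$ rather than merely $\diam(Q_n) = n$, but this follows immediately from the even-graph property $\rad(G) = \diam(G)$. A full write-up would be a one-sentence deduction, exactly as the statement ``The following corollary is immediate consequence of this fact and Proposition~\ref{prop:harmonic-even}'' in the excerpt suggests.
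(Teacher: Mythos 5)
Your proposal is correct and follows exactly the paper's route: cite that $Q_n$ is harmonic even with $\diam(Q_n)=n$, use the even-graph property $\rad(G)=\diam(G)$ to get $\rad(Q_n)=n$, and apply Proposition~\ref{prop:harmonic-even} to conclude $\rc(Q_n)=n-1$. Your explicit care in justifying $\rad(Q_n)=n$ (rather than only the diameter) is a point the paper leaves implicit, but the argument is the same.
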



\subsection{Graph products} 

Let $G$ and $H$ be two graphs. Recall that for all of the standard graph products, the vertex set of a product of the graphs $G$ and $H$ is equal to $V(G)\times V(H)$. In the \emph{lexicographic product} $G\circ H$ of graphs $G$ and $H$, two vertices
$(g_{1},h_{1})$ and $(g_{2},h_{2})$ are adjacent whenever either
$g_{1}g_{2}\in E(G)$ or ($g_{1}=g_{2}$ and $h_{1}h_{2}\in E(H)$). In
the \emph{strong product} $G\boxtimes H$ of graphs $G$ and $H$
two vertices $(g_{1},h_{1})$ and $(g_{2},h_{2})$ are adjacent whenever
($g_1g_2\in E(G)$ and $h_1=h_2$) or ($g_1=g_2$ and $h_1h_2\in E(H)$)
or ($g_1g_2\in E(G)$ and $h_1h_2\in E(H)$). Finally, in the
\emph{Cartesian product} $G\BoxProduct H$ of graphs $G$ and $H$ two
vertices $(g_{1},h_{1})$ and $(g_{2},h_{2})$ are adjacent when
($g_1g_2\in E(G)$ and $h_1=h_2$) or ($g_1=g_2$ and $h_1h_2\in
E(H)$). By definition, we have $E(G\BoxProduct H)\subseteq E(G\boxtimes
H)\subseteq E(G\circ H)$.

Let $G$ and $H$ be graphs and $*\in\{\circ, \boxtimes, \BoxProduct\}$. For a vertex
$h\in V(H)$, we call the set $G^{h}=\{(g,h)\in V(G * H) \mid g\in
V(G)\}$ a $G$-\emph{layer} of $G * H$. By abuse of notation we will
also consider $G^{h}$ as the corresponding induced subgraph. Clearly
$G^{h}$ is isomorphic to $G$. For $g\in V(G)$, the $H$-\emph{layer}
$^g\!H$ is defined as $^g\!H =\{(g,h)\in V(G * H) \mid h\in V(H)\}$. We
will again also consider $^g\!H$ as an induced subgraph and note
that it is isomorphic to $H$. The map $p_{G}:V(G * H)\rightarrow
V(G)$, $p_{G}((g,h)) = g$ is the \emph{projection onto $G$}, and
$p_{H}:V(G * H)\rightarrow V(H)$, $p_{H}((g,h)) = h$ the
\emph{projection onto $H$}. We say that $G * H$ is \emph{nontrivial}
if both factors are graphs on at least two vertices.

Next, we present lower and upper found for the radius capture number  of Cartesian product of two graphs. For this we need the following lemma.

\begin{lemma}\label{l:farEnough}
    If $G$ is a connected graph with $\rc(G)=k$, then there exist sequences $(c_i)_{i\in \mathbb{N}}$ and $(r_i)_{i\in \mathbb{N}}$ of vertices in $G$, such that $c_ic_{i+1}\in E(G)$, $r_ir_{i+1} \in E(G)$, $d(c_i, r_i) \geq k+1$ and $d(c_{i+1},r_i) \geq k$ for any integer $i$.
\end{lemma}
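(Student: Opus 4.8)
The plan is to unpack what $\rc(G)=k$ tells us about \emph{both} players. Since $\rc(G)=k$ we have $G\in{\cal{CWRC}}(k)$ and $G\notin{\cal{CWRC}}(k-1)$; the latter yields a winning robber strategy $\tau$ for the radius $k-1$ game (the robber is never within distance $k-1$, i.e.\ $d\geq k$ at all times), while the former yields a winning cop strategy $\pi$ for the radius $k$ game. Producing the required sequences amounts to exhibiting a single play in which both players genuinely move at every turn while the two relevant distances sit exactly at the boundary values $k+1$ and $k$; these plays are the cop-walk $(c_i)$ and the robber-walk $(r_i)$.

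First I would prove a gap claim: along any play consistent with $\tau$, after each robber move one has $d(c_i,r_i)\geq k+1$, not merely $\geq k$. Indeed, if some $\tau$-play reached $d(c_i,r_i)=k$ after a robber move, then a cop stepping along a shortest $c_i,r_i$-path to $c_{i+1}$ would attain $d(c_{i+1},r_i)=k-1$, capturing the robber in the radius $k-1$ game and contradicting that $\tau$ is winning. Note that $d(c_i,r_i)\geq k\geq 1$ guarantees this shortest path has a genuine first edge.

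Next I would drive the game to the tight configuration. Play $\pi$ against $\tau$ in the radius $k$ game. As $\pi$ is winning, at some finite time the distance drops to at most $k$; but by the gap claim the distance is at least $k+1$ after every robber move, so the first such drop occurs immediately after a cop move, giving $d(c_{j+1},r_j)\leq k$. Since a cop move alters the distance by at most one and $d(c_j,r_j)\geq k+1$, this forces $d(c_j,r_j)=k+1$ and $d(c_{j+1},r_j)=k$. Thus a $\tau$-consistent state with distance exactly $k+1$ after a robber move is reached.

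Finally, from time $j$ onward I would switch the cop to the pursuit strategy (always step onto a shortest path toward the robber) while keeping the robber on $\tau$, and show the configuration is self-sustaining. From $(c_i,r_i)$ with $d(c_i,r_i)=k+1$, the pursuing cop moves (genuinely, since the distance is $\geq 1$) to $c_{i+1}$ with $d(c_{i+1},r_i)=k$; the gap claim forces $d(c_{i+1},r_{i+1})\geq k+1$, while $r_{i+1}\in N[r_i]$ together with the triangle inequality gives $d(c_{i+1},r_{i+1})\leq k+1$, hence equality and $r_{i+1}\neq r_i$ — a genuine robber move. By induction the play continues forever with $d(c_i,r_i)=k+1$, $d(c_{i+1},r_i)=k$ and $c_ic_{i+1},\,r_ir_{i+1}\in E(G)$; reindexing the play from time $j$ yields the desired sequences. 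I expect the genuine-move requirement $c_ic_{i+1},\,r_ir_{i+1}\in E(G)$ to be the main obstacle, since a raw winning robber play may repeatedly ``stay''; it is precisely overcome by using $\pi$ to force the game down to the boundary distance $k+1$ and then letting a pursuing cop compel both players to move at every step.
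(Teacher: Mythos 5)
Your proof is correct and follows essentially the same route as the paper's: both rest on the same two-step threat (if the distance after a robber move were only $k$, the cop could step onto a shortest path and reach distance $k-1$, contradicting that $\rc(G)=k$ gives the robber a winning strategy in the radius-$(k-1)$ game), and both drive the play to the boundary configuration $d(c_i,r_i)=k+1$, $d(c_{i+1},r_i)=k$ and then sustain it forever. If anything, your write-up is more careful than the paper's on the forced genuine-move requirement (the paper exhibits an escape neighbor for the robber but leaves the cop's non-staying moves implicit), yet the underlying argument is the same.
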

\begin{proof}
    Let $c_1, \ldots, c_{\ell}$ and $r_1, \ldots, r_{\ell-1}$ be optimal moves of the cop and robber, respectively, when they play the cop and robber game with radius of capture $k$, such that the game ends after $\ell$-th move of the cop and $\ell$ is the smallest possible. Since $\rc(G)=k$ the game ends when the distance between both players is at most $k$. Thus $d(c_\ell,r_{\ell-1})=k$ (note that if both players play optimally, then the cop will have the last move) and $d(c_i,r_i) > k$ for any $i \in \{1, \ldots, \ell-1\}$ and $d(c_{i+1}, r_i) > k$ for any $i \in \{1, \ldots, \ell-2\}$. 
    We need to prove that they can continue moving and stay at distance at least $k$ after each of the cop's moves and at distance at least $k+1$ after each of the robber's moves. Suppose that both players continue moving with accordance to the rules of the game. Clearly, if $d(c_i, r_i) \geq k+1$, then $d(c_{i+1}, r_i) \geq k$, as $c_{i+1}\in N[c_i]$. Suppose that there exists integer $j$ such that $d(c_{j+1}, r_j) \geq k$ and $r_j$ has no neighbor that is at distance at least $k+1$ from $c_{j+1}$. Hence, $d(c_{j+1}, r_{j+1}) \leq k$ and thus cop can for his next move select a neighbor $c_{j+2}$ of $c_{j+1}$ from a shortest $c_{j+1}, r_{j+1}$-path. Hence $d(c_{j+2},r_{j+1}) < k$, which means that the cop has a strategy to win the cop and robber game with radius of capture $k-1$, a contradiction with the fact that $\rc(G)=k$.  
\end{proof}

\begin{theorem}\label{thm:Cartesian}
    Let $G$ and $H$ be connected graphs. Then 
    \begin{enumerate}
        \item[(i)] $\rc(G\BoxProduct H)\leq \min{\{\rad{(G)}+\rc(H),\rad{(H)}+\rc(G)\}},$
        \item[(ii)] $\rc(G \BoxProduct H) \geq \rc(G)+\rc(H)+1.$
    \end{enumerate}
    
\end{theorem}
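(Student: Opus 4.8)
The plan rests on the standard distance formula for the Cartesian product, $d_{G\BoxProduct H}((g,h),(g',h'))=d_G(g,g')+d_H(h,h')$, which lets one analyse the two coordinates separately and recombine them by adding distances. I will exploit throughout that a single move in $G\BoxProduct H$ changes exactly one coordinate (or stays), so the cop's move projects to a genuine move in one factor and a ``stay'' in the other.

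For the upper bound (i) I would hand the cop a two-phase strategy realising the value $\rad(G)+\rc(H)$; the bound $\rad(H)+\rc(G)$ then follows by symmetry, since $G\BoxProduct H\cong H\BoxProduct G$, and one takes the minimum. Fix a center $z$ of $G$, so that $\ecc(z)=\rad(G)$. In the first phase the cop uses only $G$-moves to bring his $G$-coordinate to $z$ (finitely many moves, during which the robber plays arbitrarily). From then on the cop freezes his $G$-coordinate at $z$ and plays, on his $H$-coordinate, an optimal winning strategy for the radius-$\rc(H)$ game on $H$, reading the projection $p_H$ of the robber's play as the robber's moves in $H$; whenever the robber changes only his $G$-coordinate this reads as the robber staying in the $H$-game, which is legal there. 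Since $H\in{\cal CWRC}(\rc(H))$, the cop eventually forces $d_H(c_H,r_H)\le \rc(H)$, and because $d_G(z,r_G)\le \ecc(z)=\rad(G)$ for \emph{every} position of the robber, at that moment $d_{G\BoxProduct H}(c,r)=d_G(z,r_G)+d_H(c_H,r_H)\le \rad(G)+\rc(H)$, so the cop wins the radius-$(\rad(G)+\rc(H))$ game.

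For the lower bound (ii), write $k=\rc(G)$ and $\ell=\rc(H)$, and build a robber strategy keeping $d_{G\BoxProduct H}(c,r)=d_G(c_G,r_G)+d_H(c_H,r_H)\ge k+\ell+1$ after each of his moves. The robber tracks the two projected subgames and maintains the invariant that, after each of his moves, $d_G(c_G,r_G)\ge k$, $d_H(c_H,r_H)\ge \ell$, and the sum is at least $k+\ell+1$; equivalently, one coordinate always carries a unit of \emph{slack}. The opening is legitimate because Theorem~\ref{thm:boundRad} gives $k\le\rad(G)-1$ and $\ell\le\rad(H)-1$, so $\rad(G)\ge k+1$ and $\rad(H)\ge\ell+1$; thus every vertex of $G$ has an antipode at distance $\ge k+1$, likewise in $H$, and the robber may start with $d_G\ge k+1$ and $d_H\ge\ell+1$. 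The crucial structural input is Lemma~\ref{l:farEnough}, which I read as a responsive restoration property in a single factor with radius capture number $t$: whenever the cop's move in that factor leaves the coordinate-distance at exactly $t$ (after the cop's move), the robber has a single move in that factor bringing it back to $t+1$, and in all cases a move keeping it $\ge t$.

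Since the cop changes only one coordinate per turn, the robber answers in that same factor and leaves the other frozen. If the cop attacks the slack-free factor, its distance drops by one but is restored to its base value ($k$ or $\ell$) by the distance-preserving part of Lemma~\ref{l:farEnough}, while the slack in the untouched factor is unaffected, so the summed invariant survives. If the cop attacks the factor carrying the slack, its distance falls from $\ge t+1$ to $\ge t$, and the robber uses the restoration move of Lemma~\ref{l:farEnough} to push it back to $t+1$ (or, symmetrically, transfers the slack by raising the other coordinate from its base to base $+1$); either way the sum stays $\ge k+\ell+1$. The main obstacle is exactly this restoration step: a plain distance-$\ge t$ defence in each factor only yields the weaker bound $k+\ell$, and one must guarantee that the robber can always regain the lost unit with a \emph{single} move in the attacked factor while the cop's enforced ``stay'' in the other factor cannot decrease that coordinate's distance. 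This is precisely what Lemma~\ref{l:farEnough} delivers, and the remaining work is the bookkeeping that records which factor currently holds the slack and checks that each of the cop's possible moves is met by an invariant-preserving reply.
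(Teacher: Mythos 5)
Your part~(i) is essentially the paper's own argument (cop parks his $G$-coordinate at a center and plays an optimal $H$-game against the robber's $H$-projection); the only differences are that your ``first phase'' is superfluous, since the cop chooses his initial vertex and can simply start at $(z,h_1)$ where $h_1$ is his optimal opening move on $H$, and that as written you never say what the cop's $H$-coordinate is when phase two begins, which matters because his winning strategy on $H$ starts from a specific vertex.

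Part~(ii), however, has a genuine gap, in two related places. First, your invariant is off by one. You keep $d_{G\BoxProduct H}(c,r)\ge k+\ell+1$ only \emph{after the robber's moves}, so after the cop's next move the distance can be $k+\ell$. In this paper capture is checked after the cop's moves (see the proofs of Theorem~\ref{thm:boundRad} and of Lemma~\ref{l:farEnough}, where the game ends with a cop move at distance exactly $k$), so at that moment the cop has already won the radius-$(k+\ell)$ game; your invariant therefore proves only $\rc(G\BoxProduct H)\ge k+\ell$. To conclude $\rc(G\BoxProduct H)\ge k+\ell+1$ the robber must be at distance $\ge k+\ell+2$ after each of his own moves. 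Second, the ``slack transfer'' bookkeeping you use to maintain the invariant is not supported by Lemma~\ref{l:farEnough}. That lemma is a statement about positions arising while the robber plays his winning strategy in a single factor: it guarantees factor distance $\ge t+1$ after each robber move and hence $\ge t$ after each cop move. It says nothing about a factor whose distance has fallen to $t-1$ (your ``cop attacks the slack-free factor'' case: from distance exactly $t$ after a robber move, the cop can push it to $t-1$, and no restoration --- indeed no lower bound at all --- is then guaranteed), nor does it let the robber raise a frozen factor from $t$ to $t+1$ on demand; a robber-move state with factor distance exactly $t$ is already inconsistent with the lemma's invariant. So the case analysis propping up the one-unit-of-slack invariant collapses exactly where it is needed.

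The repair is what the paper actually does: keep slack in \emph{both} coordinates simultaneously. Run the two imagined factor games so that after each robber move $d_G\ge k+1$ \emph{and} $d_H\ge \ell+1$; this is precisely what Lemma~\ref{l:farEnough} provides, applied to each factor separately, because a cop move in the Cartesian product changes only one coordinate, so only one imagined game advances per turn, the robber replies in that same factor, and the untouched factor keeps its full slack. Then the sum is $\ge k+\ell+2$ after robber moves and $\ge k+\ell+1$ after cop moves, which is exactly the required bound. Note that your own opening already establishes this two-slack state (you start with $d_G\ge k+1$ and $d_H\ge \ell+1$); the mistake was weakening it to ``one transferable unit of slack'' for the maintenance step, which is both insufficient for the bound and impossible to maintain with the lemma you invoke.
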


\begin{proof}
    (i) W.l.o.g. assume that $\min{\{\rad{(G)}+\rc(H),\rad{(H)}+\rc(G)\}}=\rad(G)+\rc(H)$. Let $k=\rad(G)$ and $\ell=\rc(H)$. Let $g \in V(G)$ be such that $\ecc(g)=\rad(G)$ and let $h_1$ be the cop's optimal first move in the cop and robber game with radius of capture $\ell$ played on $H$. We present a strategy for the cop to win the cop and robber game with radius of capture $k +\ell$ played on $G\BoxProduct H$. Let $c_1=(g, h_1)$ be the vertex chosen by the cop in his first move, and let $r_1=(g_1', h_1')$ be the robber's first move. The cop's strategy is to stay in $^g\!H$ and follow his strategy as if playing on $H$. Namely, if $c_i=(g, h_i)$ and the robber in his $i^{\text{th}}$ move moves from $r_{i-1}=(g_{i-1}',h_{i-1}')$ to $r_i=(g_i',h_i')$, then the cop imagines the robber's move from $h_{i-1}'$ to $h_i'$ on $H$ and responds by selecting $c_{i+1}=(g,h_{i+1})$, where $h_{i+1}$ is his optimal move on $H$. Since $\rc(H)=\ell$, there exists $i$ such that $d_H(h_{i+1},h_{i}')\leq \ell$. Since $\ecc(g)=\rad(G)$, we get $d_{G \BoxProduct H}((g,h_{i+1}),(g_i',h_i'))\leq k+\ell$, which completes the proof. 

    (ii) Let $k=\rc(G)$ and $\ell=\rc(H)$. We will present a strategy for the robber which guarantees him to keep distance at least $k+\ell+1$ from the cop at any time of the game. Let $c_1=(g_1,h_1)$ be the vertex selected by the cop in his first move. The robber then imagines two games, one played on $G$ and one played on $H$. Let $u_1$ be the robber's optimal response to the cops move $g_1$ on $G$ and $v_1$ the robber's optimal response to the cops move $h_1$ on $H$. In $G \BoxProduct H$ the robber's first move is to select $(u_1, v_1)$. Lemma~\ref{l:farEnough} implies that $d_G(g_1, u_1) \geq k+1$ and $d_H(h_1, v_1) \geq \ell+1$. Let $c_i=(g_i, h_i)$ be the vertex selected by the cop in his $i^{\text{th}}$ move. If this was the move in an $H$-layer, i.e.\ $g_i=g_{i-1}$, then the robber imagines the game on $H$ and selects his optimal move $v_i$ in $H$, i.e.\ $r_i=(u_{i-1},v_i)=(u_i, v_i)$ (Lemma~\ref{l:farEnough} implies that $d_H(h_i, v_{i-1}) \geq \ell$ and $d_H(h_i, v_i) \geq \ell+1$). Similarly, if the cop makes a move in a $G$-layer, i.e.\ $h_i=h_{i-1}$, then the robber imagines the game on $G$ and selects his optimal move $u_i$ in $G$, i.e.\ $r_i=(u_{i}, v_{i-1})=(u_i, v_i)$ (Lemma~\ref{l:farEnough} implies that $d_G(g_i, u_{i-1}) \geq k$ and $d_G(g_i, u_i) \geq k+1$). Using this strategy we get that $d_{G\BoxProduct H}((g_i,h_i),(u_i,v_i)) \geq k+l+2$ and $d_{G\BoxProduct H}((g_{i+1},h_{i+1}),(u_i,v_i)) \geq k+l+1$, which implies that $\rc(G \BoxProduct H) \geq k+\ell+1$.
\end{proof}

If at least one of the graphs $G$ or $H$ achieves the bound in Theorem~\ref{thm:boundRad}, say $G$ (i.e.\ $\rc(G)=\rad{(G)}-1$), then the upper and lower bounds from Theorem~\ref{thm:Cartesian} coincide, thus in this case $\rc(G\BoxProduct H)=\rc(G)+\rc(H)+1$. Moreover, this directly implies the radius capture number  of a Hamming graph $H(d,q)$, which is the Cartesian product of $d$ complete graphs $K_q$~\cite{BH-2011}. A special case of Hamming graphs are hypercubes $Q_d=H(d,2)$. Since the Cartesian product is associative, it holds true that for any $d \geq 2$, $H(d,q)=K_q \BoxProduct H(d-1,q)$. Using $\rc(K_q)=\rad{(K_q)}-1=0$, induction directly implies the following.

\begin{corollary}
    If $d,q$ are arbitrary positive integers, then $\rc(H(d, q))=d-1$.
\end{corollary}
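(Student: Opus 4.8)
The plan is a short induction on $d$, leaning entirely on Theorem~\ref{thm:Cartesian} together with the observation recorded just above it: whenever one factor attains the bound $\rc = \rad - 1$ of Theorem~\ref{thm:boundRad}, the upper and lower estimates of Theorem~\ref{thm:Cartesian} coincide and give $\rc(G \BoxProduct H) = \rc(G) + \rc(H) + 1$.

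First I would record the relevant facts about the complete graph $K_q$. Since $K_q$ is cop-win we have $\rc(K_q) = 0$, and as $\rad(K_q) = 1$ this means $\rc(K_q) = \rad(K_q) - 1$, so $K_q$ is an extremal factor. This is exactly the hypothesis that collapses the two bounds of Theorem~\ref{thm:Cartesian} into the single identity $\rc(K_q \BoxProduct H) = \rc(K_q) + \rc(H) + 1 = \rc(H) + 1$ for any connected graph $H$.

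The induction then runs as follows. For the base case $d = 1$ we have $H(1, q) = K_q$, so $\rc(H(1,q)) = 0 = 1 - 1$. For $d \geq 2$, associativity of the Cartesian product gives $H(d, q) = K_q \BoxProduct H(d-1, q)$; applying the identity above with $H = H(d-1, q)$ and invoking the inductive hypothesis $\rc(H(d-1, q)) = (d-1) - 1$ yields
\[
\rc(H(d, q)) = \rc(H(d-1, q)) + 1 = (d-2) + 1 = d - 1,
\]
which closes the induction.

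I expect no genuine obstacle here: all of the substantive work is already carried out in Theorem~\ref{thm:Cartesian} and in the remark that its two bounds agree for an extremal factor. The only points needing verification are the routine base-case value $\rc(K_q) = 0$ and the factorisation $H(d,q) = K_q \BoxProduct H(d-1,q)$, both of which are immediate.
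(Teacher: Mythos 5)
Your proof is correct and takes essentially the same route as the paper: the authors likewise combine the observation that the two bounds of Theorem~\ref{thm:Cartesian} collapse to $\rc(G \BoxProduct H)=\rc(G)+\rc(H)+1$ when one factor satisfies $\rc = \rad - 1$, the factorisation $H(d,q)=K_q \BoxProduct H(d-1,q)$, and $\rc(K_q)=\rad(K_q)-1=0$ to conclude by induction on $d$. There is no substantive difference between your argument and theirs.
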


Note that this result was obtained already in Section~\ref{sec:vertexTransitive}. Moreover, if $q=2$, the result was obtained also in Section~\ref{sec:even}.

In the next result we compote the radius capture number  of the strong product of two graphs.

\begin{theorem}
    Let $G$ and $H$ be connected graphs. Then $$\rc(G \boxtimes H)=\max{\{\rc(G),\rc(H)\}}.$$
\end{theorem}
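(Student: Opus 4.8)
The plan is to prove the two inequalities $\rc(G \boxtimes H)\leq \max\{\rc(G),\rc(H)\}$ and $\rc(G\boxtimes H)\geq \max\{\rc(G),\rc(H)\}$ separately, exploiting the key structural fact that the strong product allows a single move to change \emph{both} coordinates simultaneously. Write $k=\rc(G)$, $\ell=\rc(H)$ and w.l.o.g. assume $m=\max\{k,\ell\}=k$. The crucial distance formula I would use is $d_{G\boxtimes H}((g_1,h_1),(g_2,h_2))=\max\{d_G(g_1,g_2),d_H(h_1,h_2)\}$, which is the standard metric on the strong product and is what makes the maximum appear in the answer.

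For the upper bound $\rc(G\boxtimes H)\leq m$, I would have the cop run his optimal radius-$k$ strategy on $G$ and his optimal radius-$\ell$ strategy on $H$ \emph{in parallel}, projecting the robber's moves via $p_G$ and $p_H$. The point is that in the strong product the cop can advance in both layers at once: if his optimal $G$-strategy tells him to move his $G$-coordinate and his optimal $H$-strategy tells him to move his $H$-coordinate, he executes both in a single legal strong-product step. After finitely many moves his $G$-coordinate is within distance $k$ of the robber's and his $H$-coordinate within distance $\ell$, so by the distance formula $d_{G\boxtimes H}(c_i,r_i)\leq\max\{k,\ell\}=m$. The main point to verify carefully is that combining the two projected games into one genuine game is legitimate, i.e. that simultaneously executing both single-factor cop moves is always a valid edge or a stay in $G\boxtimes H$; this follows directly from the adjacency definition of the strong product.

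For the lower bound $\rc(G\boxtimes H)\geq m$, I would fix the factor attaining the maximum, say $G$ with $\rc(G)=k=m$, and give the robber a strategy that keeps the $G$-coordinate distance at least $k+1$ forever, ignoring $H$ entirely. Here I invoke Lemma~\ref{l:farEnough}: since $\rc(G)=k$ there are sequences $(c_i),(r_i)$ in $G$ with $d_G(c_i,r_i)\geq k+1$ and $d_G(c_{i+1},r_i)\geq k$. The robber projects the cop's position to $G$ via $p_G$, treats it as a move in the game on $G$, responds according to the guaranteed evasive strategy, and sets his $G$-coordinate accordingly (his $H$-coordinate may be chosen arbitrarily or held fixed). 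By the distance formula, $d_{G\boxtimes H}(c_i,r_i)\geq d_G(p_G(c_i),p_G(r_i))\geq k\geq m-1+1$ after each cop move, so the cop can never come within radius $m-1$; hence $\rc(G\boxtimes H)\geq m$.

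The subtle step I expect to be the genuine obstacle is confirming the distance formula is exactly what is needed and then making the parallel-play argument for the upper bound fully rigorous as a single legal game, since the cop must interleave moves that in the factors might be stays and in others genuine edge-traversals. The danger is a timing mismatch: one factor's optimal game may terminate in fewer moves than the other's. I would handle this by noting that once the cop is within the required radius in one factor, his optimal strategy there permits him to \emph{stay} (radius-$k$ strategies allow staying put), so he simply idles that coordinate while continuing to chase in the other factor, and the distance formula guarantees capture precisely when both coordinates are simultaneously close.
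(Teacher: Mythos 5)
Your lower bound is essentially the paper's argument: the robber plays a factor-wise evasion strategy and the distance formula $d_{G\boxtimes H}((g_1,h_1),(g_2,h_2))=\max\{d_G(g_1,g_2),d_H(h_1,h_2)\}$ does the rest (the paper has the robber evade in both factors at once, you use only the factor attaining the maximum; both work). One small imprecision: what you actually need there is the robber's evasion strategy witnessing $G \notin {\cal{CWRC}}(k-1)$, which keeps $d_G \geq k$ against \emph{every} cop; Lemma~\ref{l:farEnough} only asserts the existence of one pair of sequences, not a strategy against an arbitrary opponent. The upper bound, however, has a genuine gap, and it sits exactly at the step you flagged as the obstacle. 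Your resolution of the timing mismatch --- ``once the cop is within the required radius in one factor \ldots he simply idles that coordinate while continuing to chase in the other factor'' --- is false. Being within radius $k$ in the $G$-coordinate is a property of the \emph{pair} of positions, not of the cop's position alone: while the cop's $G$-coordinate idles, the robber keeps moving his own $G$-coordinate, so $d_G$ can grow move by move, and by the time $d_H \leq \ell$ is finally achieved, $\max\{d_G,d_H\}$ may well exceed $\max\{k,\ell\}$. Concretely, on $C_n \boxtimes C_n$ with $n$ even, whenever the cop freezes one coordinate to chase in the other, the robber walks the frozen coordinate back out beyond distance $\rc(C_n)$, and this can cycle forever, so the idling cop never captures.

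What is needed instead --- and what the paper does --- is active maintenance in the already-caught factor: once the cop's $G$-projection is at distance $\rc(G)$ from the robber's, each robber move raises $d_G$ to at most $\rc(G)+1$, so the cop can restore $d_G = \rc(G)$ by stepping along a shortest path in the $G$-coordinate, and in the strong product he can do this \emph{simultaneously} with his optimal chasing move in the $H$-coordinate; this is where the diagonal edges of $G \boxtimes H$ are genuinely used a second time. The paper extracts this maintenance property from the proof of Lemma~\ref{l:farEnough}. With it in place, at the first moment after which the $H$-distance reaches $\rc(H)$, the $G$-distance is still $\rc(G)$, so the cop is at distance exactly $\max\{\rc(G),\rc(H)\}$. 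Replacing your ``idle'' step by this tracking argument turns your proposal into the paper's proof; without it, the upper bound does not go through.
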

\begin{proof}
    Suppose that the cop is positioned in $(g,h)$. To determine his next move, the cop considers the projections to $G$ and $H$ of his and the robber's positions, determines his optimal next move in each factor, say $g' \in G$ and $h' \in H$, respectively, and moves to $(g',h') \in V(G \boxtimes H)$. Clearly, $(g,h) (g',h') \in E(G)$. As he is playing optimally on $G$ and $H$, he can get to distance $\rc(G)$ to the robber's projection to $G$ and to distance $\rc(H)$ to the robber's projection on $H$. The proof of Lemma~\ref{l:farEnough} implies that when cop reaches to distance $\rc(G)$ on the projection to $G$, after each cop's move the distance between both players is again $\rc(G)$ and the same holds for the projections to $H$. More precisely, let $c_i=(g_i,h_i)$, $r_i=(u_i,v_i)$ (where $c_i, r_i$ denote the vertices selected by the cop and the robber in step $i$, respectively) and let $i$ be the smallest index with $d_G(g_i,u_{i-1}) = \rc(G)$ and $d_G(g_j,r_{j-1}) \geq \rc(G)+1$ for any $j < i$. Then Lemma~\ref{l:farEnough} implies that the cop has a strategy on $G$ to stay at distance $k$ from the robber after each of his moves. Thus $d_G(g_j,u_{j-1})=k$ holds for any $j \geq i$. Now let $i' \geq i$ be the smallest index with $d_H(h_{i'},v_{i'-1})=\rc(H)$. Hence $$d_{G \boxtimes H}((g_{i'},h_{i'}),(u_{i'-1},v_{i'-1}))=\max{\{d_G(g_{i'},u_{i'-1}),d_H(h_{i'},v_{i'-1})\}}=\max{\{\rc(G),\rc(H)\}},$$
    and thus the cop has a strategy to capture the robber at distance $\max{\{\rc(G),\rc(H)\}}$.

    Similarly, the robber's strategy is to consider projections to $G$ and $H$ and play optimally in the factors. Thus, he can maintain the distance at least $\rc(G)$ from the cop's projection on $G$ and at least $\rc(H)$ from the cop's projection on $H$. Hence, the robber is always at least $\max{\{\rc(G),\rc(H)\}}$ away from the cop on $G \boxtimes H$.
\end{proof}

Finally, we present exact value of $\rc(G \circ H)$ that depends mostly on $\rc(G)$.

\begin{theorem}
    Let $G$ and $H$ be non-trivial connected graphs. Then  
  \[\rc(G \circ H) = 
  \begin{cases}
      \rc(G), & \text{if } \rc(G) \geq 1,\\
    \min{\{1,\rc(H)\}}, & \text{if } \rc(G)=0.
  \end{cases}
\]
\end{theorem}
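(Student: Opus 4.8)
The plan is to reduce the game on $G\circ H$ to games on the factors via the projections $p_G,p_H$, after first recording how distances behave in the lexicographic product. Writing $(g_c,h_c)$ and $(g_r,h_r)$ for the cop's and robber's positions, one has $d_{G\circ H}((g_1,h_1),(g_2,h_2))=d_G(g_1,g_2)$ whenever $g_1\neq g_2$, and $d_{G\circ H}((g_1,h_1),(g_1,h_2))=\min\{d_H(h_1,h_2),2\}$ (the value $2$ is attained because $g_1$ has a neighbour in the connected nontrivial graph $G$). Two consequences drive everything. First, if the cop sits in a layer adjacent in $G$ to the robber's layer, he can move straight onto the robber, since across a $G$-edge the $H$-coordinate is unconstrained; hence the robber can never safely occupy a layer neighbouring the cop's, and once the cop shares the robber's layer the robber cannot leave it without being caught. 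Second, the subgraph induced on a single layer is a copy of $H$, so as long as both players stay in one layer the game is literally the game on $H$ (with distances capped at $2$). I will also use the standard bookkeeping that the cop wins with radius $k$ exactly when he can force $d\le k$ right after one of his own moves (the robber does not reply), equivalently the robber survives radius $k$ iff he keeps $d\ge k+2$ after each of his own moves.

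For $\rc(G)\ge 1$ I would prove $\rc(G\circ H)=\rc(G)=:k$. For the lower bound the robber fixes his $H$-coordinate and plays, through $p_G$, the evasion witnessing $\rc(G)=k$, so that $d_G\ge k+1$ after his moves; since $k+1\ge 2$ the players then lie in different layers and $d_{G\circ H}=d_G\ge k+1$, which is survival for radius $k-1$. For the upper bound the cop plays his optimal radius-$k$ strategy through $p_G$, forcing $d_G\le k$ after some move: if $d_G\ge 1$ then $d_{G\circ H}=d_G\le k$, and if $d_G=0$ then $d_{G\circ H}\le 2$, which is at most $k$ as soon as $k\ge 2$. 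The only delicate point is $k=1$, where I would have the cop realise his capture at $d_G=1$ rather than $d_G=0$: whenever his strategy would step from distance $1$ onto the robber's projection he instead stays, so that $d_{G\circ H}=1$.

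For $\rc(G)=0$ (so $G$ is cop-win) I split on $\rc(H)$. If $\rc(H)=0$, then $G\circ H$ is cop-win: the cop first catches the $G$-projection (possible since $G$ is cop-win) and keeps $g_c=g_r$; any attempt of the robber to leave the layer is answered by the cop stepping straight onto him across the $G$-edge, so the robber is confined to one layer, where the game is the cop-win game on $H$ and the cop wins. If $\rc(H)\ge 1$, then $H$ is not cop-win, hence not complete, so $\diam(H)\ge 2$ and the robber can keep $d_H\ge 2$ against one cop on $H$. For $\rc(G\circ H)\le 1$ the cop uses his $G$-cop-win strategy to reach, at one of his turns, $d_G(g_c,g_r)\le 1$ and then makes $d_G=1$ (staying if it is already $1$, otherwise stepping to any neighbour, which exists as $|V(G)|\ge 2$); then $d_{G\circ H}=1$. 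For the matching lower bound the robber maintains the invariant ``same layer as the cop and $d_H\ge 2$'': if the cop moves inside the layer he replies with his radius-$0$ evasion on $H$, and if the cop switches layers he follows him into the new layer (free across the $G$-edge) and jumps to an $H$-vertex at distance $\ge 2$ from the cop's $H$-coordinate. This keeps $d_{G\circ H}=2$, so the robber is never captured and $\rc(G\circ H)\ge 1$.

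I expect the main obstacle to be the two boundary values $k=1$ and the case $\rc(G)=0,\ \rc(H)\ge 1$, where the capped within-layer distance and the free $H$-coordinate across $G$-edges interact. There I must make sure the cop aims for an \emph{adjacent} layer (distance exactly $1$) rather than the robber's own layer (where the distance could be $2$), and, symmetrically, that the robber shadows the cop's \emph{current} layer instead of trying to open a $G$-gap that the structure of $G$ may not permit. Keeping the turn order precise—so that a cop reaching distance $\le k$ wins at once and the robber gets no reply—is exactly what forces the $d_G=1$ refinement and the $d\ge k+2$ survival invariant.
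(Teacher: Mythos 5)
Your proposal is correct, and for the main cases it mirrors the paper's argument: for $\rc(G)\geq 1$ both you and the paper have each player run his optimal $G$-strategy through the projection $p_G$, and for $\rc(G)=0$, $\rc(H)\geq 1$ both use the same cop strategy (chase the $G$-projection, then realise distance exactly $1$ by stepping to an adjacent layer) and the same robber strategy (shadow the cop's layer and play the radius-$0$ evasion on $H$ inside it, restarting it whenever the cop changes layers). The one place where you genuinely diverge is the subcase $\rc(G)=\rc(H)=0$: the paper disposes of it structurally, noting that $G$ and $H$ are dismantlable and that corners multiply (if $u$ is a corner of $G$ and $v$ a corner of $H$, then $(u,v)$ is a corner of $G\circ H$), so $G\circ H$ is dismantlable and hence cop-win; you instead give a direct two-phase pursuit (capture the $G$-projection using that $G$ is cop-win, then confine the robber to a single layer---any escape across a $G$-edge is an immediate capture---and win the $H$-game inside that layer). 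Your route needs the standard fact that a cop-win graph is won from every starting configuration, whereas the paper's corner argument avoids all strategy bookkeeping; on the other hand, your explicit distance formula $d=\min\{d_H,2\}$ inside a layer and your treatment of the boundary case $k=1$ (having the cop stop at projected distance $1$ rather than step onto the robber's projection, where the product distance could be $2$) make precise a point that the paper's proof passes over silently.
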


\begin{proof}
    First let $\rc(G) \geq 1$.  Suppose that the cop is positioned in $(g,h)$. If the robber selects the vertex $(g,h')$, then the cop can move to $(g',h)$ for arbitrary neighbor $g'$ of $g$ in $G$ and reach distance $1 \leq \rc(G)$ to the robber. Otherwise, the cop has the following strategy: he considers the projection of his and the robber's positions to $G$, determines his optimal next move on $G$, say $g' \in V(G)$, and moves to $(g',h') \in V(G \circ H)$, where $h'$ is an arbitrary vertex of $H$. Using this strategy he will eventually reach the distance at most $\rc(G)$ to the robber. 
    
    Conversely, the robber can consider the projection of his and the cop's positions to $G$, determine his optimal next move in $G$, say $u' \in V(G)$, and move to $(u',v') \in V(G \circ H)$, where $v'$ is an arbitrary vertex of $H$. Using this strategy in $G$, the robber can stay at distance at least $\rc(G)$ from the cop, consequently he is at the same distance also in $G \circ H$. Thus, $\rc(G \circ H)=\rc(G)$ when $\rc(G) \geq 1$.

    Now, let $\rc(G)=0$ and $\rc(H)=0$. Hence, both $G$ and $H$ are cop-win graphs and thus dismantlable graphs. Therefore $G \circ H$ is dismantlable (note that if $u$ is a corner in $G$ and $v$ a corner in $H$, then $(u,v)$ is a corner in $G \circ H$) and thus $\rc(G\circ H)=0$. 

    Finally, let $\rc(G)=0$ and $\rc(H) \geq 1$. Suppose that the cop is positioned in $(g,h)$. If the robber selects the vertex $(g,h')$, then the cop can move to $(g',h)$ for an arbitrary neighbor $g'$ of $g$ in $G$ and reach the distance $1$ to the robber. Otherwise (if the robber's and the cop's first moves are in different $H$-layers), the cop has the following strategy: he considers the projection of his and robber's positions to $G$, determines his optimal next move in $G$, say $g' \in V(G)$, and moves to $(g',h') \in V(G \circ H)$, where $h'$ is the projection to $H$ of the robber's current position. Using this strategy, the cop will eventually capture the robber since $\rc(G)=0$. Thus, the cop has a strategy to reach the distance at most 1 from the robber, which implies that $\rc(G \circ H) \leq 1$.
    
    Conversely, the robber has the following strategy. If in an arbitrary move cop selects the vertex $(g,h)$, then robber selects the vertex $(g,h')$ in the same $H$-layer, where $h'$ is an optimal robber's respond on $H$ to the cop's move $h$ on $H$, if cop played inside the $H$-layer, and otherwise (if cop moved to a neighboring layer) $h'$ is an optimal first move of the robber on $H$ if cop selects $h$ on $H$ in his first move. Using this strategy, after each of the robber's moves the distance between both players is at least 2 and after each of the cop's moves the distance between them is at least 1. Hence, the robber has a strategy to keep distance at least 1 in every stage of the game, which implies that $\rc(G \circ H) \geq 1$.
\end{proof}

\section{Concluding remarks}

As already mentioned, bipartite graphs that are in ${\cal{CWRC}}(1)$ have been characterized in~\cite{CCNV-2011}. We wonder if it is possible to characterize bipratite graphs that are in ${\cal{CWRC}}(k)$ for $k \geq 2$.

Let us consider the following situation. Two player are moving on the vertices of a graph, such that only one is allowed to move at a time and they both want visit all of the graph's vertices. For two such walks (one for each player) at each point in time we measure the distance between the players, the minimum of these distances is a safety distance that the players were able to maintain at all points in time during those two walks. Taking into consideration all possible pairs of such walks, we want to maximize the obtained safety distance. So the question is, what is the largest possible safety distance, two players are able to maintain at all points in time during traversal of the vertices of a given graph by obeying the movement rules and both visiting all vertices? The described invariant has been introduced under the name of \emph{the Cartesian vertex span} of the given graph $G$, denoted by $\cvSpan{G}$. For a more precise definition and similar invariants depending on other movement results, see~\cite{BT-2023}.

As this problem resembles the cop and robber game with radius of capture $k$, although here only the robber is trying to maximize the distance between the players and we can pose the restriction of visiting all vertices only to the cop, we wonder if the two graph invariants are related.

\begin{question}
    Is the Cartesian vertex span of a connected graph $G$ an upper bound for the radius capture number, i.e.  $\rc(G) \leq \cvSpan{G}$?
\end{question}

\section*{Acknowledgements}
The authors acknowledge the financial support from the Slovenian Research and Innovation Agency (research core funding No.\ P1-0297 and projects N1-0285, Z1-50003, N1-0218, and N1-0355). Vesna Iršič Chenoweth was also supported by the European Union (ERC, KARST, 101071836).



\end{document}